  \def\thtext#1{
  \catcode`@=11
  \gdef\@thmcountersep{. #1}
  \catcode`@=12}
\newtheorem{thm}{Theorem}
\newtheorem{lm}{Lemma}
\newtheorem{ex}{Example}
\newtheorem{crl}{Corollary}
\newtheorem{st}{Statement}
 \newcounter{il}[section]
 \renewcommand{\thtext}
 {\thechapter.\arabic{il}}
 \newcounter{il2}[section]
 \renewcommand{\thtext}
 {\thechapter.\arabic{il2}}
 \newenvironment{dfn}{\trivlist \item[\hskip\labelsep{\bf Definition}]
 \refstepcounter{il2}{\bf \arabic{il2}.}}%
 {\endtrivlist}
 \newenvironment{rk}{\trivlist \item[\hskip\labelsep{\bf Remark}]
 \refstepcounter{il2}{\bf \arabic{il2}.}}%
 {\endtrivlist}
\def\Z{{\mathbb Z}}
\def\N{{\mathbb N}}
\def\R{{\mathbb R}}
\date{}
\author{Vassily Olegovich Manturov}
\title{Parity and Cobordisms of Free Knots}
\begin{document}

\newcommand{\eps}{\varepsilon}

\maketitle

\begin{abstract}
In the present paper, we construct a simple invariant which provides
a sliceness obstruction for {\em free knots}. This obstruction
provides a new point of view to the problem of studying cobordisms
of curves immersed in $2$-surfaces, a problem previously studied by
Carter, Turaev, Orr, and others.

The obstruction to sliceness is constructed by using the notion of
{\em parity} recently introduced by the author into the study of
virtual knots and their modifications. This invariant turns out to
be an obstruction for cobordisms of higher genera with some
additional constraints.

AMS Subject Classification: 57M25.

\end{abstract}

\section{Introduction}

Curves immersed in 2-surfaces admit a natural notion of {\em
cobordisms}: one says that two curves $\gamma$ and $\gamma'$
immersed in  oriented closed $2$-surfaces (not necessarily
connected, but for the sake of simplicity we shall use the notation
for connected surfaces) $S_{g}$ and $S_{g'}$, are cobordant if there
is an oriented $3$- manifold $M$ whose boundary consists of two
surfaces $S^{+}_{g}=S_{g}$ and $S^{-}_{g'}=S_{g'}$ and a properly
{\em smoothly mapped cylinder} $C\subset M$, such that $\partial
C=C\cap
\partial M$ and $C\cap S^{+}_{g}=\gamma,C\cap S^{-}_{g'}=\gamma'$, see Fig. \ref{cobor}.
\begin{figure}
\centering\includegraphics[width=200pt]{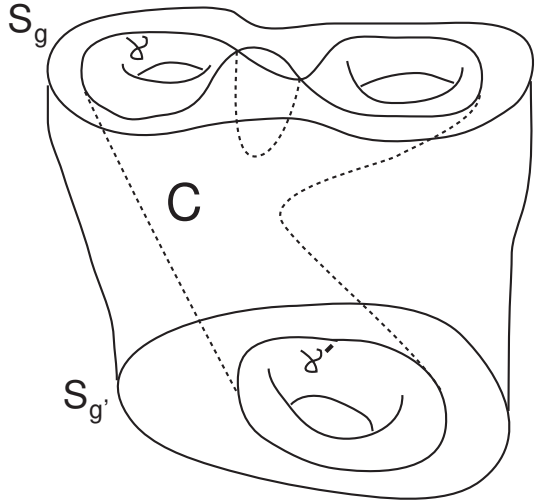} \caption{A
cobordism between immersed curves} \label{cobor}
\end{figure}
 In
particular, one says that a curve $\gamma\in S_{g}$ is {\em
null-cobordant} (or {\em slice}) whenever there is an oriented
$3$-manifold $M,\partial M=S_{g}$ and a disc $D$ properly immersed
in $M$ by a map $f$ such that $f(\partial D)=f(D)\cap \partial
 M=\gamma$.

Analogously, one says that the {\em slice genus} of $\gamma\subset
S_{g}$ does not exceed $h$ if in the above definition one uses a
surface $D_{h}$ of genus $h$ with one boundary component instead of
the disc $D$.

The cobordism is obviously an equivalence relation; cobordism
classes of curves form a group (with connected sum playing the role
of multiplication) where the equivalence class of null-cobordant
elements play the role of unity.

The first obstructions for curves to be null-cobordant were found by
Carter, \cite{Ca}; after that, the theory was also studied by Turaev
\cite{Tu}, Orr, and others.

\begin{rk}
In the sequel, we deal only with generic immersions of curves in
$2$-surfaces, unless specified otherwise.
\end{rk}

It can be easily proved that if two curves $\gamma,\gamma'\subset
S_{g}$ are homotopic then they are cobordant, so one can talk about
{\em cobordism classes of homotopy classes of curves}.

Thus, it is natural to talk about cobordism classes of {\em flat
virtual knots} (virtual strings) \cite{Flats}, which are equivalence
classes of circle immersions in $2$-surfaces considered up to
homotopy and stabilization/destabilization. The stabilization
operation (addition of a handle to $S_g$  away from the curve)
obviously does not change the cobordism class of the curve.

The paper  \cite{Ma1} (full versions see in \cite{Ma}) pioneered the
overall study of {\em free knots}, which are formal equivalence
classes of framed $4$-graphs with modulo the three Reidemester
moves. The free knots are a thorough simplifiaction of virtual
knots, moreover, the are a simplification of flat virtual knots as
well, since they can be obtained from the latter by ``forgetting the
surface structure''.

In  \cite{Ma1}, the first examples of non-trivial free knots were
constructed, and some properties were established. Some days
afterwards, non-trivial free knots were constructed by Gibson
\cite{Gib}. The key notion used in many constructions from
\cite{Ma,Ma1,Atoms} is the notion of {\em parity}, which allows one
to solve several problems, construct new invariants of (virtual)
knots and their relatives, strengthen some known invariants, and
construct maps from knots to knots.

Once it is proved that free knots are generally non-trivial, the
question of {\em non-sliceness} of free knots arose; rigorous
definitions of the slice genus see ahead in Definition
\ref{frknslice}.

Informally speaking, before constructing an invariant of some
topological objects, one should first look at some ``building
bricks'' or ``nodes'' of these objects (like crossings of a knot
diagram or intersection points of a curve generically immersed in a
$2$-surface or intersection lines of a $2$-surfaces immersed in a
$3$-manifold), and try to figure out whether one can distinguish
between different types of them by looking at the global
topology/combinatorics of our object. Then, one may try to modify
existing combinatorial invariants (or to construct new ones) by
adding this extra information into a given setup.

In the context of free knots all vertices of the four-graph can be
naturally split into the set of ``even ones'' and ``odd ones''.

It is clear that every invariant of free knots generates an
invariant of flat virtual knots (one just has to take its
composition with a natural ``forgetting projection''). So, for free
knots one can naturally define the notion of cobordism in a way such
that that if two flat virtual knots are cobordant then so are the
underlying free knots.

On the other hand, cobordism invariants constructed by Carter, Orr,
Turaev, and others can not be straightforwardly defined for the case
of free knots, since they use some homological data of the surface,
which free knots do not possess. In some sense, parity can
substitute homological/homotopy information, when there is no
``genuine'' homology.

The concept of parity has some other applications in the cobordism
theory for free knots/immersed curves. In particular, if a free knot
$\gamma$ is slice, then so is the free knot $\gamma'$ obtained from
$\gamma$ by ``killing odd crossings'', see Theorem \ref{deleteodd}
ahead.

The aim of the present paper is to construct one simple (in fact,
integer-valued) invariant of free knots which gives an obstruction
for a free knot to be slice (null-cobordant).

The paper is organized as follows. First we define free knots,
parity, Gaussian parity and construct our invariant. We prove its
invariance under Reidemeister moves. Then, to show that the
invariant is well-behaved under cobordisms, we have to extend the
notion of parity from one knot to the spanning disc of the
cobordism. This is done by marking double lines of the spanning disc
as ``even'' and ``odd''. After that we give the basic definitions of
Morse theory for cobordisms, and outline the proof of the main
theorem. Taking a Morse function on a spanning disc, one extends the
invariant to all the regular sections of this function, and the
non-triviality of the initial invariant coupled with simple Morse
theoretic arguments leads to a contradiction. The key point in the
proof is the way to extend the notion of parity from vertices of
$4$-graphs to double lines of $2$-surfaces.

We conclude the paper by a list of further possible developments of
this theory of parity and cobordisms. In particular, the approach
described in the present paper is good for detecting non-sliceness
but is not (in its original form) applicable for getting estimates
of the slice genus because of some caveats relating parity and Morse
theory.

On the other hand, the concept of parity evidently has
multidimensional analogues and may be applied for $2$-knots in
$4$-space and similar objects. Our last section initiates a
discussion of  problems of such sort.

In \cite{IM} some other relation called {\em cobordisms of free
knots} was discussed: instead of topological definition obtained by
considering {\em spanning discs}, we dealt with a formal {\em
combinatorial} definition following Turaev which relies on a set of
{\em moves}. An invariant of combinatorial cobordism was
constructed.

The interrelation between these two equivalence relations,
topological cobordisms and combinatorial cobordisms, will be
considered in a future publication.

\subsection{Acknowledgement}

My study of free knots and their cobordisms was initiated after I
got deeply impressed by a talk ``Surface Knots and Iterated
Intersection Pairings'' by Kent E.Orr given at a conference in
Heidelberg in 2008 about a deep connection between cobordisms of
curves in $2$-surfaces and cobordisms of knots. The obstruction (due
to Carter and later, its expansion due to Turaev and Orr) was based
on homological information about the surface in question. This lead
me to an idea of finding a good ``substitute'' for homology when
instead of a curve in a $2$-surface one has an abstract curve with
self-intersection.

Various discussions with Kent Orr led me to a deeper understanding
of many results in this area, and I am deeply indebted to him.

I express my gratitude to L.H.Kauffman, D.P.Ilyutko and M.Chrisman
for fruitful consultations.

\subsection{Free Knots}

By a {\em graph} we always mean a finite (multi)graph; loops and
multiple edges are allowed.
 From now on, by a
{\em $4$-graph} we mean the following generalization of a
four-valent graph: a $1$-dimensional complex $\Gamma$, with each
connected component being homeomorphic either to the circle (with no
matter how many $0$-cells) or to a four-valent graph; by a {\em
vertex} we shall mean only vertices of those components which are
homeomorphic to four-valent graphs, and by {\em edges} we mean
either edges of four-valent-graph-components or circular components;
the latter will be called {\em cyclic edges}.

We say that a $4$-graph is {\em framed} if for every vertex of it,
the four emanating half-edges are split into two pairs. We call the
half-edges from the same pair {\em opposite}. We shall also apply
the term {\em opposite} to edges containing opposite half-edges.

By  {\em isomorphism} of framed $4$-graphs we assume a
framing-perserving homeomorphism. All framed  $4$-graphs are
considered up to isomorphism.

Denote by $G_{0}$ the framed $4$-graph homeomorphic to the circle.

By a {\em unicursal component} of a framed $4$-graph we mean either
its connected component homeomorphic to the circle or an equivalence
class of its edges, where the equivalence is generated by the
relation of being opposite.

By a {\em chord diagram} we mean a cubic graph consisting of one
selected cycle  passing through all vertices of the graph and a set
of chords. We call this cycle {\em the core of the chord diagram}. A
chord diagram is {\em oriented} whenever its circle is oriented.
Edges belonging to the core cycle are called {\em arcs} of the chord
diagram.

Let $D$ be a chord diagram. Then the corresponding $4$-graph $G(D)$
with a unique unicrusal component is constructed as follows. If the
set of chords of $D$ is empty then the corresponding graph will be
$G_{0}$. Otherwise, the edges of the graph are in one-to-one
correspondence with arcs of the chord diagram, and vertices are in
one-to-one correspondence with chords of the chord diagram. The arcs
incident to the same chord end, correspond to the (half)-edges which
are formally opposite at the vertex corresponding to the chord. We
say that two chords $a$ and $b$ of a chord diagram $D$ are {\em
linked}, if the ends of the chord  $b$ belong to different connected
components of the complement to the ends of $a$ in the core circle
$C$. In this case we write $\langle a,b\rangle=1$. Otherwise we say
that chords are {\em unlinked} and write $\langle a,b\rangle=0$.

The inverse procedure (of constructing a chord diagram from a framed
$4$-graph) with one unicursal component is evident. In this
situation every connected framed $4$-graph can be considered as a
topological space obtained from the circle by identifying some pairs
of points. Thinking of the circle as the core circle of a chord
diagram, where the pairs of identified chords will correspond to
chords, one obtains a chord diagram.

For a framed $4$-graph  $G$ with one unicursal component we define
the $\Z_{2}$-pairing of vertices
 vertices $v_{1},v_{2}$ as follows: $\langle
v_{1},v_{2}\rangle=\langle a(v_{1}),a(v_{2})\rangle$, where
$a(v_{1}),a(v_{2})$ are chords corresponding to the vertices $v_{1},
v_{2}$.

Our first aim is to study equivalence classes of framed four graphs
modulo some moves corresponding to Reidemeister moves for knots, cf.
\cite{Ma}.

Each of this moves is a transformation of one fragment of a framed
$4$-graph.

{\em Graphical notation.} In figures depicting moves on diagrams, we
shall draw only the changing part; the stable part will be omitted.
In the case of one unicursal component a move can be represented by
Gauss diagram; the move changes the diagram on some set of arcs; we
shall not draw those chords away from the Reidemeister move being
performed; the arcs having no ends of chords taking part in the
move, will be depicted by dotted lines.

When drawing framed graphs on the plane, we always assume that the
opposite half-edges structure is induced from the plane.

The first Reidemeister move is an addition/removal of a loop, see
Fig.\ref{1r}.

\begin{figure}
\centering\includegraphics[width=100pt]{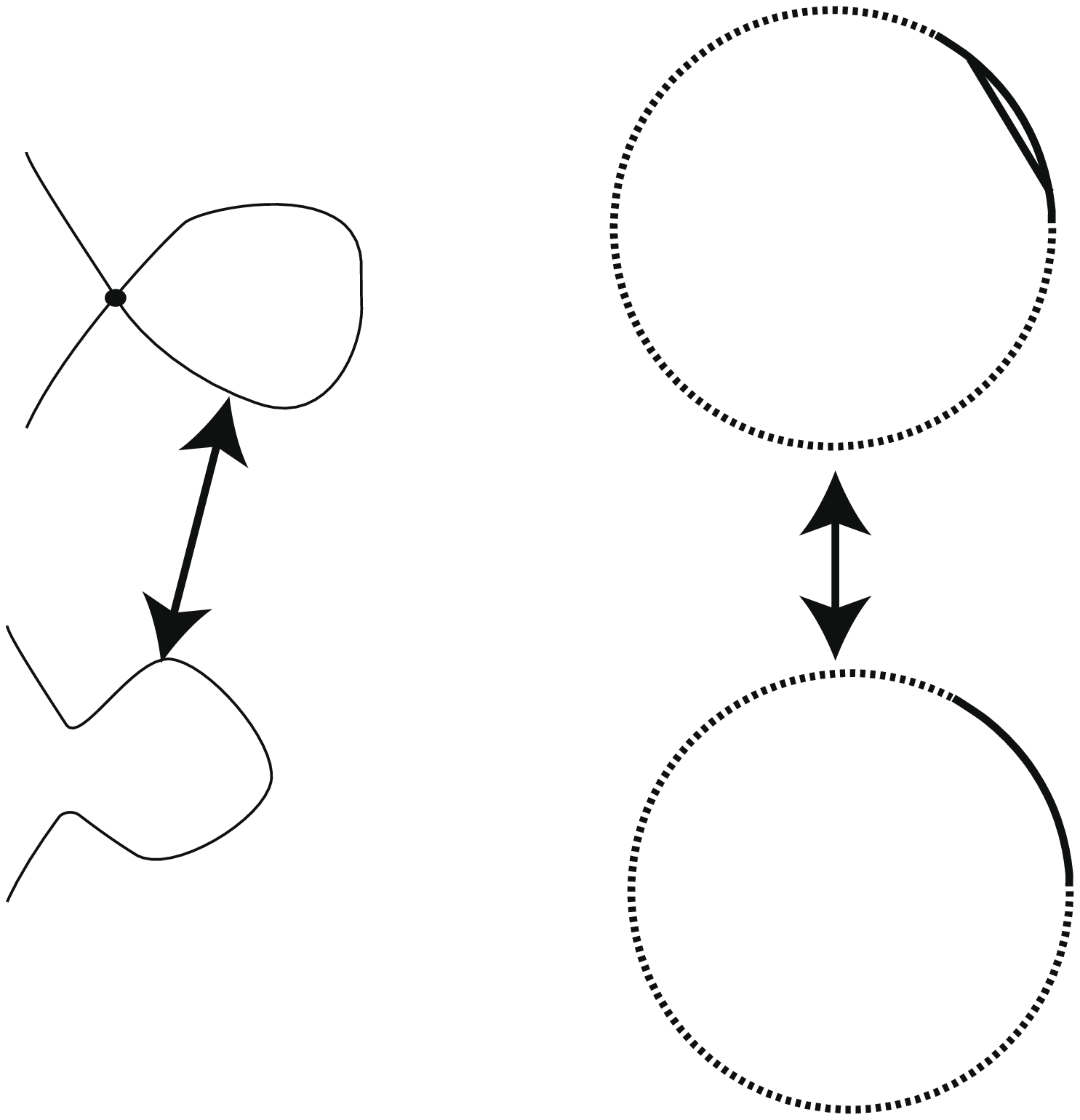} \caption{The first
Reidemeister move and its chord diagram version} \label{1r}
\end{figure}

The second Reidemeister move is an addition/removal of a bigon
formed by a pair of edges which are adjacent (not opposite) in each
of the two vertices, see Fig. \ref{2r}. It has two variants;
nevertheless, it can be easily seen, that any of these two variants
is expressed as a combination of the other one and the second
Reidemeister moves.

\begin{figure}
\centering\includegraphics[width=150pt]{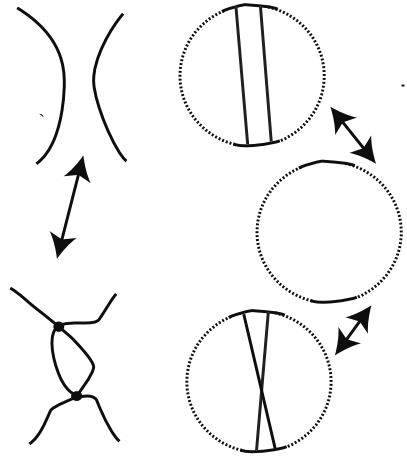} \caption{The second
Reidemeister move and its chord diagram version} \label{2r}
\end{figure}

The third Reidemeister move is shown in Fig. \ref{3r}.

\begin{figure}
\centering\includegraphics[width=150pt]{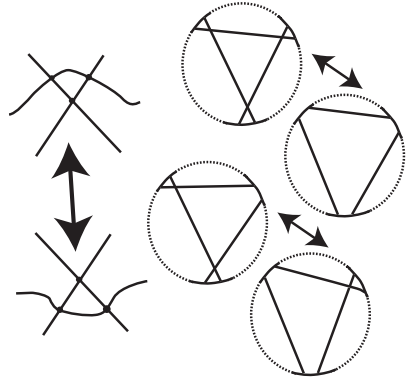} \caption{The third
Reidemeister move and its chord diagram version} \label{3r}
\end{figure}

\begin{dfn} A  {\em free link} is an equivalence class of framed 4-graphs modulo Reidemeister moves.

It is evident that the number of components of a framed 4-graph does
not change after applying a Reidemeister move, so, it makes sense to
talk about {\em the number of components of a free link}.

By a {\em free knot} we mean a free link with one unicursal
component.

Free knots can be treated as equivalence classes of Gauss diagrams
by moves corresponding to Reidemeister moves.
\end{dfn}

The {\em free unknot} (resp., free $n$-component unlink) is the free
knot (link) represented by $G_{0}$ (resp., by $n$ disjoint copies of
$G_{0}$).

Analogously one defines {\em long free knots}; here one should break
the only component and pull its ends to infinity; all graphs; all
moves are considered in finite domains. Equivalently, we may
consider a $4$-valent graph with one unicursal component with a
marked edge up to the Reidemeister moves performed away from the
marked point.

\subsection{The Parity Axiomatics. The Gaussian Parity}

We define the key properties of {\em parity} for free knots by using
a set of axioms, following \cite{Ma}.

\begin{dfn}
A {\em parity} is a map assigning to each pair $(\Gamma,V)$, where
$\Gamma$ is a framed $4$-graph and $V$ is a vertex of $\Gamma$, a
number $p(V)$ (also denoted by $p_{\Gamma}(V)$) which is equal to
$0\in \Z_{2}$ (in this case the vertex $V$ is called {\em even}) or
$1\in \Z_{2}$ (when the vertex is called {\em odd}) in such a way
that this rule satisfies the following axioms.

\begin{enumerate}

\item If a framed $4$-graph $K_2$ is obtained from a framed $4$-graph $K_1$ by a first (decreasing) Reidemeister move then
the crossing of $K_1$ taking part in the Reidemeister move is  {\em
even};

\item If $K_2$ is obtained from $K_1$ by a second Reidemeister move
then both crossings participating in this Reidemeister move, are
{\em of the same parity};

\item If $K_2$ is obtained from $K_1$ by a third Reidemeister move
then there is a one-to-one correspondence between the triple of
crossings of  $K_1$ taking part in the Reidemeister move and the
analogous tripe of crossings of $K_2$, ($(x,x'),(y,y'),(z,z')$), see
Fig. \ref{sootvt}.

\begin{figure}
\centering\includegraphics[width=200pt]{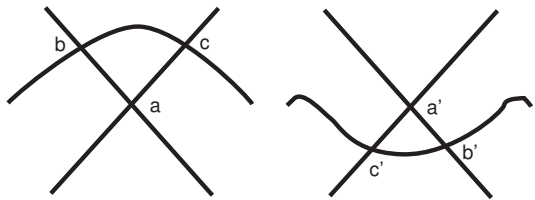} \caption{The
third Reidemeister move and corresponding crossings} \label{sootvt}
\end{figure}

We require that

a) $p(x)=p(x'),p(y)=p(y'), p(z)=p(z')$;

b) Among $x,y,z$, the number of odd crossings is even (i.e., is
equal to $0$ or  $2$).

\item For every Reidemeister move $K_{1}\to K_{2}$ there exists a
one-to-one correspondence between crossings of $K_1$ away from the
Reidemeister move, and crossings of $K_2$ away from the Reidemeister
move.

We require that the corresponding crossings of the diagrams $K_1$
and $K_2$ are of the same parity.

\end{enumerate}

\end{dfn}

We introduce the notion of {\em justified parity} analogously to the
notion of parity.

\begin{dfn}
By {\em justified parity} of crossings we mean a parity where each
odd crossing is marked by a letter $b$ or $b'$ (in these cases we
call a crossing {\em an odd crossing of the first type} or {\em an
odd crossing of the second type}, respectively), so that the
following holds:

\begin{enumerate}

\item If a second Reidemeister move is applied to two odd crossings then they are of the same type (either both are $b$ or both are $b'$).

\item If in a third Reidemeister move we have two odd crossings, then each of them changes its type after the Reidemeister move
is applied (the crossing marked by $b$ before the Reidemeister move
is applied should correspond to the crossing marked by $b'$ after
the Reidemeister move is applied).

\item Moreover, odd crossings not taking part in the Reidemeister move, do not change their types.

\end{enumerate}

\end{dfn}

We define the {\em Gaussian parity} and the {\em Gaussian justified
parity} on framed $4$-graphs with one unicursal component, as
follows.

\begin{dfn}[The Gaussian parity and justified parity]
Let $D$ be a chord diagram.

We say that a chord of $D$ is {\em even} (after Gauss), if the
number of chords linked with it, is even, and {\em odd}, otherwise.
Furthermore, an odd chord is {\em of the first type} if it is linked
with an even number of even chords; otherwise an odd chord is said
to be {\em of the second type} (after Gauss).

For a framed  $4$-graph $\Gamma$ corresponding to a chord diagram
 $D$ the Gaussian parity and justified parity are defined as those
 of the corresponding chord diagram.

\end{dfn}

It can be easily checked that the Gaussian parity and the Gaussian
justified parity satisfy the axioms of parity and justified parity.

Later in this paper, for cobordism purposes we shall then extend the
notion of Gaussian parity and justified Gaussian parity for another
situation. To that end, we shall first define the Gaussian parity
for double lines of a $2$-disc with generic intersections, and then
for every regular section of this disc (which will be a framed
$4$-graph representing a free link) we shall define the parity for
crossings to be the parity of double lines it comes from.

However, for the first goal (the construction of an invariant of
free knots) it would be sufficient for us to have a well defined
parity just for framed $4$-graphs with one unicursal component.

\subsection{A functorial mapping $f$}

Let $K$ be a framed $4$-graph. Let $f$ be a diagram obtained from
$K$ by {\em removing all odd crossings} and connecting opposite
edges of former crossings to one edge.

\begin{thm}
The map $f$ is a well-defined map on the set of all free knots. For
a virtual knot diagram $K$, $f(K)=K$ iff all crossings of $K$ are
even. Otherwise, the number of classical crossings of $f(K)$ is
strictly less than the number of classical crossings of
$K$.\label{vyro}
\end{thm}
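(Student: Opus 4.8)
The plan is to verify three things about the map $f$ defined by deleting all odd crossings: first, that $f$ descends to a well-defined map on free knots (i.e.\ is invariant under Reidemeister moves); second, the fixed-point characterization; and third, the strict decrease in crossing number when some crossing is odd. The second and third claims are immediate once well-definedness is in hand, so the real content is the first claim.

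To prove that $f$ is well-defined, I would check that whenever $K_1 \to K_2$ is a Reidemeister move, the graphs $f(K_1)$ and $f(K_2)$ are related by Reidemeister moves (or are equal). First I would fix the parity being used (the Gaussian parity) and recall from Axiom~4 that crossings away from the move keep their parity under $f$, so they are deleted or kept consistently on both sides. The argument then splits by the type of move. For a first Reidemeister move, Axiom~1 says the participating crossing is even, so it survives in $f$ on both sides, and $f(K_1) \to f(K_2)$ is again a first Reidemeister move. For a second Reidemeister move, Axiom~2 says the two crossings have the \emph{same} parity: if both are even they survive and $f$ performs the same second move, while if both are odd they are both deleted, in which case $f(K_1)$ and $f(K_2)$ are literally identical (deleting a pair of odd crossings on both sides yields the same graph), so no move is needed.

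The third Reidemeister move is the main obstacle and the crux of the argument. Here Axiom~3(b) is exactly the tool: among the three participating crossings $x,y,z$ the number of odd ones is even, hence $0$ or $2$. If all three are even, $f$ acts trivially on them and reproduces a third Reidemeister move. If exactly two are odd, then after deletion only one crossing survives on each side; by the one-to-one correspondence of Axiom~3(a), $p(x)=p(x')$ and so on, so the same single crossing survives in $f(K_1)$ and $f(K_2)$. The key observation to carry out carefully is that deleting the two odd crossings from the third-move fragment and smoothing leaves, on both sides, the surviving strand passing over (or the surviving single crossing sitting among) the same configuration; I expect that in this case $f(K_1)$ and $f(K_2)$ coincide or differ by an evident second Reidemeister move. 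The combinatorial check that these two local pictures agree is the step I would treat most carefully.

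Finally, the remaining assertions follow formally. For a virtual (framed $4$-graph) diagram $K$, the map $f$ removes precisely the odd crossings, so $f(K)=K$ holds exactly when there are no odd crossings to remove, i.e.\ when every crossing is even; and if at least one crossing is odd, that crossing is deleted while every surviving crossing corresponds to a classical crossing of $K$, so the classical crossing number of $f(K)$ is strictly smaller than that of $K$. This completes the three claims.
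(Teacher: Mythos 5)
Your proposal is correct and is exactly the argument the paper has in mind: the paper offers no written proof of Theorem~\ref{vyro}, stating only that it ``easily follows from the parity axioms'' (citing Turaev for the Gaussian case), and your move-by-move case analysis via Axioms 1--4 is precisely that omitted argument. The one step you hedge --- the third Reidemeister move with two odd crossings --- closes even more cleanly than you expect: since deleting a vertex of a framed $4$-graph means splicing its opposite edges, removing the two odd crossings leaves on both sides the identical local configuration (the surviving even crossing between its two strands, plus a third strand passing through with no vertices), so $f(K_1)=f(K_2)$ holds literally and no auxiliary second Reidemeister move is needed.
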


This theorem was known to Turaev for the case of Gaussian parity; in
the general case it easily follows from the parity axioms.

As we shall see, this map will take slice free knots to slice free
knots and will never increase the slice genus.

\section{An invariant of free knots}

In the present section, we shall construct an invariant of free
knots constructed from the parity and justified parity, and prove
its invariance. We shall later prove that this invariant delivers a
sliceness obstruction for a free knot. Within the present section,
by {\em parity} and {\em justified parity} we mean the Gaussian
parity and the Gaussian justified parity.

An extension of the invariant to be presented below, is constructed
in \cite{MM}; for our purposes (sliceness obstruction) the version
given here will suffice, nevertheless, it is an important question
to investigate the invariant from \cite{MM} related to the
sliceness.

Set $G=\langle a,b,b'|a^{2}=b^{2}=b'^{2}=e,ab=b'a\rangle$.

Our first goal is to construct an invariant of free long knots
(resp., of compact free knots) valued in $G$ (in the set of
conjugacy classes of the group $G$).

Let $D$ be an oriented chord diagram, with a marked point $X$ on the
core circle $C$ distinct from any chord end. Later we shall see how
one can get rid of the orientation of $D$.

We distinguish between {\em even} and {\em odd} chords of $D$;
moreover, we distinguish between {\em two types of odd chords} of
$D$.

With a marked oriented chord diagram $(D,X)$ we associate a word in
the alphabet $\{a,b,b'\}$, as follows. Let us walk along the core
circle $C$ of the diagram $D$ starting from $X$. Every time we meet
a chord end, we write down a letter $a,b$ or $b'$ depending on
whether the chord whose endpoint we met, is even, first type odd, or
second type odd. Having returned to the point $X$, we obtain a word
$\gamma(D,X)$; this word determines an element of $G$; by abuse of
notation we shall denote this element by $\gamma(D,X)$ as well.
Moreover, sometimes we shall omit $X$ from the notation when it is
clear from the context which initial point we have chosen.

\begin{thm}
If two marked chord diagrams $(D,X)$ and $(D',X')$ generate
equivalent free knots then the following two conjugacy classes
coincide: $[\gamma(D)]=[\gamma(D')]$ in $G$.\label{th1}
\end{thm}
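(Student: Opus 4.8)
The plan is to show that the conjugacy class of $\gamma(D,X)$ in $G$ is invariant under each of the three Reidemeister moves, and also independent of the choice of marked point $X$ (which accounts for passing from the element to its conjugacy class). Since free knots are equivalence classes of chord diagrams modulo these moves, establishing invariance move-by-move suffices. The key structural fact I would exploit is the presentation $G=\langle a,b,b'\mid a^{2}=b^{2}=b'^{2}=e,\ ab=b'a\rangle$: every generator is an involution, and the one nontrivial relation $ab=b'a$ records exactly how an even letter ($a$) slides past an odd letter while toggling its type ($b\leftrightarrow b'$). This matches the justified-parity axiom that an odd crossing swaps type under a third Reidemeister move, so the relation is engineered precisely to absorb the combinatorial changes the moves induce on the word.

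First I would handle the dependence on the marked point. Moving $X$ along the core circle past one chord end changes $\gamma(D,X)$ by a cyclic rotation: reading the word from a different starting point conjugates the group element by the prefix we skipped. Hence different choices of $X$ on a fixed diagram give conjugate elements, so $[\gamma(D)]$ is well-defined. Next, for the first Reidemeister move, the chord being added or removed is even (by the first parity axiom), so it contributes a pair of adjacent $a$'s to the word at consecutive core positions; since $a^{2}=e$, deleting this $aa$ (or inserting it) does not change the group element, giving invariance. For the second Reidemeister move, the two participating chords have the same parity (second axiom) and, if odd, the same type (justified-parity axiom 1); they contribute either $aa$ or $bb$ or $b'b'$ as an adjacent pair, each of which cancels by the relevant involution relation, so again the element is unchanged up to the canceling pair.

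The main obstacle, and the heart of the proof, is the third Reidemeister move. Here the three strands cross pairwise at crossings $x,y,z$, and the move permutes how the six chord-ends interleave along the core circle; by parity axiom 3b an even number of $x,y,z$ are odd, and by justified-parity axiom 2 each odd crossing among them switches type. I would set up the word as a product of blocks separated by the letters coming from $x,y,z$, and verify that the net effect of the move is exactly a rearrangement realizable by repeated application of $a^{2}=b^{2}=b'^{2}=e$ and $ab=b'a$ (together with its consequences such as $b'a=ab$, $ab'=ba$). The case analysis splits according to how many of the three crossings are odd (zero or two), and within the two-odd case according to which types occur; in each case the relation $ab=b'a$ must reproduce both the reordering of letters and the simultaneous type-toggling $b\leftrightarrow b'$ dictated by the axiom. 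Checking that these algebraic moves suffice in every configuration is the delicate bookkeeping step; I expect that the relation was chosen so that each required transposition of an even letter past an odd one is furnished by $ab=b'a$ exactly once, closing the argument.
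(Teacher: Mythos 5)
Your proposal is correct and follows essentially the same route as the paper's proof: $aa$-cancellation for the first move, $uu$-insertions (at two places) for the second, and a corner-by-corner analysis of the third move in which each even--odd transposition with simultaneous type toggle is exactly the relation $ab=b'a$ (equivalently $ba=ab'$). The one step you defer as ``delicate bookkeeping'' --- the corner where the two \emph{odd} chord ends are adjacent --- closes just as you anticipate, but via the involutions rather than $ab=b'a$: if the two odd chords have the same type the fragment changes as $bb\leftrightarrow b'b'$, both trivial in $G$, and if their types differ the fragment $bb'$ is literally unchanged, since the transposition and the double toggle $b\leftrightarrow b'$ cancel each other.
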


An extended version of this theorem is proved in \cite{MM}; see also
\cite{}

\begin{proof}

Indeed, if  $D$ and $D'$ differ by one first Reidemeister move (say,
$D'$ has one extra chord) then the word $\gamma(D')$ is obtained
from $\gamma(D)$ by an addition of two consequent letters $a\cdot
a$; thus, the corresponding elements from $G$ coincide.

Analogously, if  $D'$ obtained from $D$ by an increasing second
Reidemeister move, then the two new chords of  $D'$ are of the same
parity and of the same type; denote the letter corresponding to each
of these two chords ($a$,$b$ or $b'$), by $u$. Thus, the word
$\gamma(D',X')$ is obtained from $\gamma(D,X)$ by addition of
 $u\cdot u$ in two places. As in the first case, it does not change
 the corresponding element of $G$.

When applying the third Reidemeister move $D\to D'$ one of the
following two possibilities may occur. In the first case,
 all three chords participating in the third Reidemeister move, are even.

In this case the words $\gamma(D)$ and $\gamma(D')$ coincide
identically.

In the second case, two of the three chords taking part in the
Reidemeister move, are odd, and one chord is even. Consider those
three segments of the words $\gamma(D)$ and $\gamma(D')$ where the
ends of the three moving chords are located. For those segments
containing an end of the odd chord, we get one of the two
substitutions $ab\longleftrightarrow b'a$ or  $ba\longleftrightarrow
ab'$: indeed, after the third Reidemiester move, the odd chord has
changed its type.

Both changes correspond to some relations in  $G$.

Now consider the segment of the diagram containing the two ends of
the odd chords. If these two odd chords are of the same type in $D$,
then in $D'$ they are of the same type as well. Consequenlty, when
passing from $\gamma(D)$ to $\gamma(D')$ we replace the subword
$b\cdot b$ by $b'\cdot b'$ or vice versa. Since both subwords
correspond to the trivial element of  $G$, we have
$\gamma(D)=\gamma(D')$ in $G$.

Finally, if the two chords participating in the third Reidemeister
move, are of different types on the diagram $D$, then when passing
from $\gamma(D)$ to $\gamma(D')$ the order of $b$, $b'$ in the
fragment of the corresponding word {\em stays the same}: the
adjacent letters
 $b$ and $b'$ change their position twice.

Thus, no Reidemeister move changes the element of $G$ corresponding
to the chord diagram.
\end{proof}

This theorem immediately yields the following

\begin{crl}
The conjugacy class of the element $[\gamma(D,X)]$ in $G$ is an
invariant of free knots given by the diagram $D$, i.e., it does not
depend on the fixed point $X$.\label{crl1}
\end{crl}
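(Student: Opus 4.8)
The plan is to reduce the claim to the elementary fact that a cyclic permutation of a word yields a conjugate group element. First I would observe that the word $\gamma(D,X)$ depends only on \emph{which arc} of the core circle $C$ contains $X$: as long as $X$ stays within a single arc, i.e.\ between two consecutive chord ends, walking around $C$ starting from $X$ records the same sequence of letters, so $\gamma(D,X)$ is literally unchanged. Hence there are only finitely many words to compare, one per arc, and it suffices to compare the words attached to two adjacent arcs.

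Next I would analyze what happens when $X$ crosses a single chord end, passing from one arc to the next. Write $\gamma(D,X)=\ell_1\ell_2\cdots\ell_n$, where $\ell_1\in\{a,b,b'\}$ is the letter recorded at the first chord end we meet. Moving $X$ just past that end produces the new word $\gamma(D,X')=\ell_2\cdots\ell_n\ell_1$, a cyclic shift by one letter. In $G$ this reads $\gamma(D,X')=\ell_1^{-1}\gamma(D,X)\ell_1$, so $\gamma(D,X)$ and $\gamma(D,X')$ are conjugate and therefore $[\gamma(D,X)]=[\gamma(D,X')]$. Iterating this elementary move connects any two admissible positions of $X$ by finitely many conjugations, whence the conjugacy class $[\gamma(D,X)]$ is independent of $X$.

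Finally, combining this basepoint-independence with Theorem \ref{th1} (which already guarantees that equivalent free knots give equal conjugacy classes for a fixed choice of marked point) shows that $[\gamma(D)]$ is a genuine invariant of the free knot represented by $D$.

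The step I expect to require the most care is verifying that the passage is \emph{exactly} a cyclic permutation, with no letter lost, duplicated, or altered as $X$ sweeps across a chord end. Since each generator is an involution, conjugation by $\ell_1$ agrees with conjugation by $\ell_1^{-1}$, so the direction of the shift is immaterial; the only real content is matching the letter read at the crossed chord end with the generator by which one conjugates.
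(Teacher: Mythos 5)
Your proof is correct and takes essentially the same route as the paper, which also observes that moving the marked point $X$ through a chord end cyclically permutes the letters of $\gamma(D,X)$ and hence conjugates the element in $G$ (by the involutive generator read at that end), combined with Theorem \ref{th1} for invariance under Reidemeister moves. Your write-up merely spells out the one-line argument in more detail; no discrepancy or gap.
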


Indeed, moving the marked point through a chord end corresponds to a
cyclic permutation of the letters, which, in turn, generates a
conjugation in $G$.

\subsection{The Cayley graph of $G$}

Its Cayley graph looks like a vertical strip on a squared paper
between $x=0$ and $x=1$: we choose the point $(0,0)$ to be the unit
in the group; the multiplication by  $a$ on the right is chosen to
be one step in a horizontal direction (to the right if the first
coordinate of the point is equal to zero, and to the left if this
first coordinate is equal to one), the multiplication by $b$ is one
step upwards if the sum of coordinates is even and one step
downwards if this sum is odd, and the multiplication by  $b'$ is one
step downwards if the sum of coordinates is even and one step
upwards if the sum of coordinates is even, see Fig. \ref{band}.

\begin{figure}
\centering\includegraphics[width=40pt]{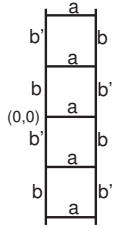} \caption{The Cayley
graph of the group $G$} \label{band}
\end{figure}

With each pointed chord diagram one associates an element from $G$
having coordinates $(0,l)$. Moreover, the conjugacy class of the
element $(0,l)$ for $l\neq 0$ consists of the two elements: $(0,4l)$
and $(0,-l)$. Thus, for each {\em long free knot} one gets an
integer-valued invariant, equal to  $l$; we shall denote this
invariant by  $l(K)$; each compact free knot has, in turn, the
invariant equal to  $|l|$; we shall denote the latter by $L(K)$.

It is an easy exercise to show that $l$ is divisible by $4$; in
fact, it can be also shown that $l$ is divisible by $8$.

It is obvious that if we invert the orientation of the chord
diagram, we shall reverse the order of letters in the word $\gamma$;
this leads to the switch $(0,l)\to (0,-l)$. So, the invariant $L(K)$
can be defined for {\em unoriented} free knots.

\begin{figure}
\centering\includegraphics[width=150pt]{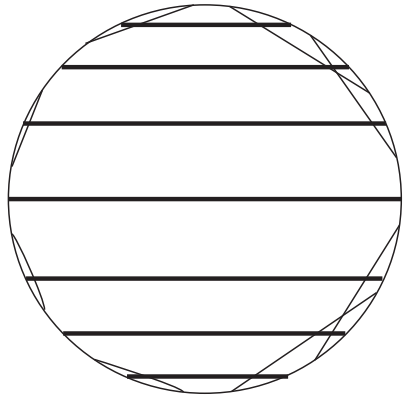} \caption{A non-slice
free knot} \label{fig1}
\end{figure}

We shall describe elements of $G$ by two coordinates on the Cayley
graph. It can be easily checked that any element of $G$
corresponding to a chord diagram has coordinates $(0,4m)$. Indeed,
the word corresponding to a Gauss diagram has an even number of
occurences of $a$, and the total number of letters $b$ and $b'$ is
divisible by four. The latter, in turn,  follows from the fact that
the number of {\em odd chords} of a chord diagram is even, which, in
turn, just means that the number of odd-valent vertices of any graph
is always even. Moreover, the conjugacy class of the element with
coordinates $(0,4m)$, $m\neq 0$ in $G$ consists of two elements:
$(0,4m)$ and $(0,-4m)$. Thus, every {\em compact free knot} has an
integer-valued invariant equal to  $L=|4m|$  (it will be convenient
for us to preserve the factor $4$ in the definition of our
invariant).

In Fig.  \ref{fig1} we have a free knot $K_{1}$ for which
$L(K_{1})=16$.

We use {\em bold} lines to describe even chords. The corresponding
word in $G$ (with an appropriate choice of the marked point) looks
like $(b'a)^{7}b'b(ab)^{7}= (b'b)^{16}$.

\subsection{Remarks on the definition of the invariant $L$ for links}

Note that Theorem \ref{th1} works for {\em any parity}, not only for
the Gaussian one.

For cobordism purposes, we have to understand the behaviour of the
invariant $L$ not only under Reidemeister moves, but also under
Morse bifurcations.

Our further strategy is as follows. Assuming we have a cobordism
(see Definition \ref{frknslice}) ${\cal D}\to D$ that spans the
curve $\gamma$, we shall define the parity and justified parity of
{\em double lines} (i.e., those lines on $D$ having preimages
consisting of two connected components). This parity will be defined
in a way such that the parity (the justified parity) for a double
point on $\gamma$ will coincide with the parity (justified parity)
for a double line, this point belongs to. Besides, this approach
will allow us to define the parity and the justified parity for any
{\em generic section} of a cobordism; such a section will be a
framed $4$-graph representing a multicomponent link. With these
parity and justified parity in hand, we shall be able to extend our
invariant $L$ to sections of $L$ (levels of the Morse function on
$D$) and then understand the behaviour of this invariant under Morse
bifurcations.

For a single component free-link (free knot), the value of the
invariant $\gamma$ can be expressed by one non-negative integer $L$.
For a cobordism, every section is a multicomponent free link, so, we
have to define the invariant $\gamma$ on this multicomponent link to
be a collection of conjugacy classes of elements of $G$ (one for
each component), and we may require that these elements of $G$ are
expreseed by one non-negative integer number each, in order to
associate an integer number to each component.

To this end, we shall need that:

1) The parity and the relative parity are well-defined for sections
and well-behaved under Morse moves.

2) The number of even intersection points on each component is {\em
even}; otherwise the corresponding element of $G$ will have a
non-zero first coordinate.

3) The value of the first coordinate of the element of $G$
(described by the invariant $L$) behaves well with respect to any
Morse bifurcations.

In particular, every component of a non-singular level link has an
even number of intersection points: it is necessary to describe the
value of our invariant $L$ for the parity to be well defined.
Indeed, in order to define the Gaussian parity of some crossing, one
has to take some ``half'' of the circle corresponding to this
crossing and count the number of intersection points belonging to
this half. In the case of a free knot the parity of this number of
points does not depend on the half one chooses because it equals the
parity of the number of chords linked with the chord in question.

When we have a two-component link, and we take a crossings formed by
single component,  the two parities corresponding to the two halves
will be different if the total number of crossings between
components is odd.

So, for those $2$-component links having an odd number of
intersection points between components, there is no immediate way to
extend the Gaussian parity.

As we shall see further, all these conditions will be automatically
satisfied when we consider the {\em sections} of a disc cobordism.

\section{Parity as homology}

In the present section, we are going to reformulate the notion of
parity and justified parity in terms of homology,
\cite{Paritytrieste}. This reformulation will be useful for
understanding the way how to define the parity on double lines of
the cobordism.

Consider a framed $4$-graph $\Gamma$ with one unicursal component.
The homology group $H_{1}(\Gamma,\Z_{2})$ is generated by ``halves''
corresponding to vertices: for every vertex $v$ we have two halves
of the graph  $\Gamma_{v,1}$ and $\Gamma_{v,2}$, obtained by
smoothing at this vertex, see Fig. \ref{smo}. If the set of all
framed $4$-graphs (possibly, with some further decorations at
vertices) is endowed with a {\em parity}, we may assume that we are
given the following cohomology class $h$: for each of the halves
$\Gamma_{v,1},\Gamma_{v,2}$ we set
$h(\Gamma_{v,1})=h(\Gamma_{v,2})=p(v)$, where $p(v)$ is the parity
of the vertex $v$. Taking into account that every two halves sum up
to give the cycle generated by the whole graph, we have defined a
``characteristic'' cohomology class $h$ from $H^{1}(\Gamma,\Z_{2})$.

Let $c\in H_{1}(\Gamma,\Z_{2})$ be a subgraph of $\Gamma$. Denote by
$d(c)$ the collection of those vertices of $\Gamma$ where $c$ is
incident to exactly two half-edges, and these two edges are
non-opposite.

Then it  follows immediately that $c$ is equal to the sum $\sum_{i}
\Gamma_{d_{i},1}$ up to an overall addition of $\Gamma$.

\begin{figure}
\centering\includegraphics[width=180pt]{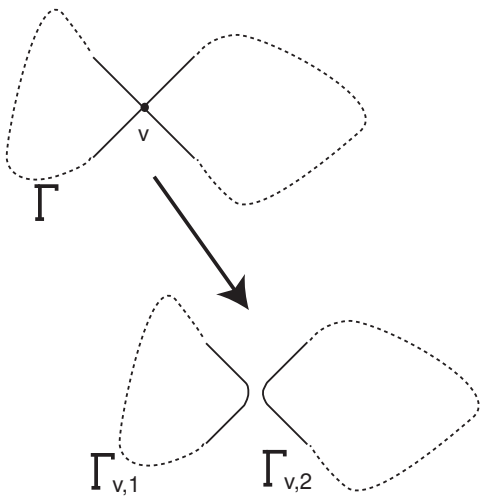} \caption{The halves
$\Gamma_{v,1}$ and $\Gamma_{v,2}$} \label{smo}
\end{figure}

 Collecting the properties of this cohomology
class and recalling the parity axiomatics, we see that

\begin{enumerate}

\item For every framed $4$-graph $\Gamma$ we have $h(\Gamma)=0$.

\item If $\Gamma'$ is obtained from  $\Gamma$ by a first
Reidemeister move adding a loop then for every basis
$\{\alpha_{i}\}$ of $H_{1}(\Gamma,\Z_{2})$ there exists a basis of
the group $H_{1}(\Gamma,\Z_{2})$ consisting of one element $\beta$
corresponding to the loop and a set of elements $\alpha'_{i}$
naturally corresponding to $\alpha_{i}$.

Then we have $h(\beta)=0$ and $h(\alpha_{i})=h(\alpha'_{i})$.

\item Let $\Gamma'$ be obtained from $\Gamma$ by a second increasing
Reidemeister move. Then for every basis $\{\alpha_{i}\}$ of
 $H_{1}(\Gamma,\Z_{2})$ there exists a basis in
$H_{1}(\Gamma',\Z_{2})$ consisting of one ``bigon'' $\gamma$, the
elements $\alpha'_{i}$ naturally corresponding to $\alpha_{i}$ and
one additional element $\delta$, see Fig. \ref{r2r3}, left.

\begin{figure}
\centering\includegraphics[width=220pt]{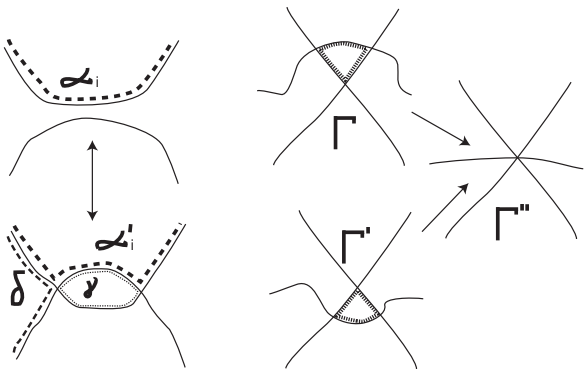} \caption{The
cohomology condition for Reidemeister moves} \label{r2r3}
\end{figure}

Then the following holds: $h(\alpha_{i})=h(\alpha'_{i})$,
$h(\gamma)=0$.

(note that we impose no constraints on $\delta$).

\item Let $\Gamma'$ be obtained from $\Gamma$ by a third
Reidemeister move. Then there exists a graph $\Gamma''$ with one
vertex of valency  $6$ and the other vertices of valency $4$ which
is obtained from either of $\Gamma$ or $\Gamma'$ by contracting the
``small'' triangle to the point. This generates the mappings
$i:H_{1}(\Gamma,\Z_{2})\to H_{1}(\Gamma'',\Z_{2})$ and
$i':H_{1}(\Gamma',\Z_{2})\to H_{1}(\Gamma'',\Z_{2})$, see Fig.
\ref{r2r3}, right.

Then the following holds: the cocycle $h$ is equal to zero for small
triangles, besides that if  for $a\in H_{1}(\Gamma,\Z_{2}),a'\in
H_{1}(\Gamma',\Z_{2})$ we have $i(a)=i'(a')$, then $h(a)=h(a')$.

\end{enumerate}

Thus, every parity for free knots generates some $\Z_{2}$-cohomology
class for all framed $4$-graphs with one unicursal component, and
this class behaves nicely under Reidemeister moves.

The converse is true as well. Assume we are given a certain
``universal'' $\Z_{2}$-cohomology class for all four-valent framed
graphs satisfying the conditions 1)-4) described above. Then it
originates from some {\em parity}. Indeed, it is sufficient to
define the parity of every vertex to be the parity of the
corresponding half. The choice of a particular half does not matter,
since the value of the cohomology class on the whole graph is zero.
One can easily check that parity axioms follow.

This point of view allows to find parities for those knots lying in
$\Z_{2}$-homologically nontrivial manifolds.

\section{Slice Genus and Cobordisms of Free Knots}

\begin{dfn}
Let $K$ be a framed $4$-valent graph. We say that $K$ has {\em slice
genus at most $h$} if there exists a surface ${\cal D}_{h}$ of genus
$h$ with one boundary component $S$, a $2$-complex $D_{h}\supset K$,
and a continuous map $g:{\cal D}_{h}\to D_{h}$, such that:

\begin{enumerate}

\item $g(\partial {\cal D}_{h})=K\subset D_{h}$; for every vertex
$v$ of $K$ we have $g^{-1}(v)=\{v_{1},v_{2}\}$, and small
neighbourhood $U(v_{1})\subset S$ is mapped to a pair of {\em
opposite edges} of $K$ at $v$;

\item the map $g$ is one-to-one everywhere except on the  union of
intervals: $\Sigma=\{x\in D_{h}:Card (g^{-1}(x))>1\}$;

\item the subset $\Sigma_{3}=\{x\in D_{h}: Card (g^{-1}(x))>2\}$ is
finite, and consists only of those points having exactly three
preimages; moreover $\Sigma_{3}\cap {\partial D}_{h}=\emptyset$;
double lines (preimages of $\Sigma\backslash \Sigma_{3}$ approach
$\partial{\cal D}_{h}$ transversally.

\end{enumerate}

The surface ${\cal D}_{h}$ will be called {\em the spanning surface}
or the {\em cobordism of genus $h$}.

In other words, we require that the free knot $K$ (image of the
circle $S$) is spanned by $D_{h}$, image of the $2$-surface ${\cal
D}_{h}$ with boundary $S$ and the singularities of the map $g:{\cal
D}_{h}\to D_{h}$ are all generic.

Analogously, one defines the cobordism of genus $h$ for
graph-links.

The relation $\sim$ for free-knots to be cobordant is defined for
cobordisms of genus $0$.

\end{dfn}

 The closure ${\bar \Sigma}$ contains also {\em cusps}:
those points $x\in D_{h}$ for which $Card(g^{-1}(x))=1$ and such
that for every small neighbourhood $U(x)$, the intersection
$U(x)\cap \Sigma$ is the punctured interval. Let $\zeta=g(\partial
{\cal D})$. Denote by $\Psi$ the preimage $g^{-1}(\bar
\Sigma)\subset {\cal D}_{h})$. Let $\Sigma_{2}$ be $\Sigma\backslash
\Sigma_{3}$.

The image $g(S)\subset D_{h}$ is a four-valent framed graph in
$D_{h}$. Indeed, this image is obtained from $S=\partial {\cal
D}_{h}$ by {\em gluing double points} of $S$; it has the framing
(opposite edge structure) induced from $S$: for a point $x$ in
$g(S)\cap \Sigma$, the preimage $g^{-1}(U(x)\cap g(S))$ consists of
two branches of $S$; the images of those two branches will generate
the two pairs of opposite edges.

\begin{dfn}
If $K$ admits a cobordism of genus $h$ and does not admit a
cobordism of genus $h-1$ we say that $K$ has {\em slice genus} $h$.
Notation: $sg(K)=h$.

A free knot of genus $0$ is called {\em null-cobordant} or {\em
slice}.\label{frknslice}
\end{dfn}

The following lemma follows from the definition of free knot:
\begin{lm}
If framed $4$-graphs $K,K'$ represent equivalent free knots then $K$
and $K'$ are cobordant and $sg(K)=sg(K')$.\label{sgcobord}
\end{lm}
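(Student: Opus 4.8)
The plan is to reduce the statement to a single Reidemeister move and then build, for each elementary move, an explicit genus-$0$ cobordism realizing the \emph{trace} of that move; the equality of slice genera will then follow by gluing such a cobordism onto a minimal spanning surface. First I would recall that, as already noted in the introduction, the cobordism relation on free knots is an equivalence relation: reflexivity is witnessed by the product (mapping-cylinder) cobordism $K\times[0,1]$, symmetry by reversing the $[0,1]$ coordinate, and transitivity by stacking two cobordisms and smoothing. Since $K$ and $K'$ represent the same free knot, they are joined by a finite sequence of Reidemeister moves, so by transitivity it suffices to produce a genus-$0$ cobordism for each single move $K\to\tilde K$.

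For the trace construction I would use the product cobordism over the portion of the diagram untouched by the move and interpolate inside the small disc supporting it. For the first move (birth/death of a loop) the trace is the elementary surface whose double-point set terminates at a single cusp in the interior of $D_{h}$ — exactly the type of cusp that $\bar\Sigma$ is permitted to contain in Definition \ref{frknslice}. For the second move (birth/death of a bigon) the trace is the surface along which two double lines are born together, with $\Sigma_{3}=\emptyset$. Both are visibly of genus $0$, and their double lines meet $\partial{\cal D}_{h}$ transversally. For the third move the trace naturally contains exactly one triple point, so $\Sigma_{3}$ is a single interior point — precisely the configuration allowed by Definition \ref{frknslice} — and no handle is created, so the genus is again $0$. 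In each case one verifies that the opposite-edge (framing) structure induced on the two boundary components coincides with that of $K$ and $\tilde K$, so that the surface is a genuine genus-$0$ cobordism of framed $4$-graphs.

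It remains to deduce $sg(K)=sg(K')$. Suppose $K$ bounds a spanning surface ${\cal D}_{h}$ of genus $h=sg(K)$, and let $C$ be a genus-$0$ cobordism from $K$ to $K'$ built as above. Gluing $C$ onto ${\cal D}_{h}$ along the copy of $K$ yields a surface with a single boundary component, namely the copy of $K'$. By additivity of Euler characteristic under gluing along a boundary circle — a genus-$h$ surface with one boundary glued to a cylinder of genus $0$ with two boundaries gives $\chi=(1-2h)+0-0=1-2h$ and one boundary component, hence genus $h$ — this is a spanning surface for $K'$ of genus $h$. Therefore $sg(K')\le sg(K)$, and applying the same argument to the reversed cobordism $-C$ gives $sg(K)\le sg(K')$, so $sg(K)=sg(K')$.

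The main technical obstacle is the verification, for the third Reidemeister move, that the trace is generic in the sense of Definition \ref{frknslice}: in particular, that the single self-intersection lying above the triple point, together with the three double lines emanating from it, satisfies the required transversality conditions and contributes no genus. The first two moves are routine by comparison, once one checks that the cusp of the first-move trace and the double-line birth of the second-move trace fit the allowed singularity model. Care must also be taken that the framing structure is transported correctly through each interpolation, so that each boundary circle of the cobordism represents the intended framed $4$-graph.
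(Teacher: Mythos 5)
Your proposal is correct and follows essentially the same route as the paper: the paper's proof (via Fig.~\ref{rmovescobord}) consists precisely of the elementary trace cobordisms you construct, with the first Reidemeister move realized by a cusp, the second by a tangency point, and the third by a triple point of the double-line set. Your additional bookkeeping --- reduction to a single move by transitivity, gluing the trace onto a minimal spanning surface, and the Euler-characteristic computation showing the genus is preserved --- merely makes explicit what the paper's one-line argument leaves implicit.
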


Indeed, in Fig. \ref{rmovescobord} we demonstrate the cobordisms
between $K$ and $K'$: the first Reidemeister move corresponds to a
cusp point, the second Reidemeister move corresponds to a passage
through a tangency point, and the third Reidemeister move
corresponds to a triple point

Thus, it makes sense to speak about {\em the slice genus} of free
knots.

\begin{rk}
Let $K$ be a flat virtual knot and $|K|$ be the underlying free
knot. Then it follows from the definition that the slice genus of
$K$ is greater than or equal to the slice genus of $|K|$. In
particular, if $K$ is slice then so is $|K|$.
\end{rk}

\begin{ex}
The first example of a non-slice flat virtual knot was constructed
by Carter \cite{Ca}, it is shown in Fig. \ref{crt}.

\begin{figure}
\centering\includegraphics[width=100pt]{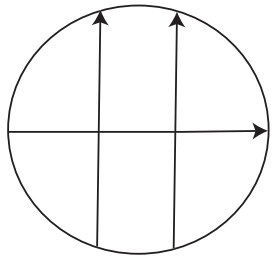} \caption{Carter's
non-slice flat virtual knot} \label{crt}
\end{figure}

In Fig. \ref{crt}, the arrows indicate the {\em clockwise direction
of branches}. Namely, orient the core circle of the chord diagram
counterclockwise and orient the immersed curve accordingly. If two
oriented branches $(a,b)$ of the curve  have an intersection at a
double point $X$ and the tangent vectors $v_{X,a}, v_{X,b}$ form a
positively-oriented basis, then the arrow is directed from $a$ to
$b$.

In this notation, for flat virtual knots, two chords pariticipating
in a second Reidemeister move should have opposite orientations. The
flat knot in Fig. \ref{crt} is non-trivial as a {\em flat virtual
knot}. Nevertheless, when we forget about the arrows and pass to the
free knot, we can first cancel the two vertical arrows (by a second
Reidemeister move) and then cancel the horizontal arrow (by a first
Reidemeister move). So, the corresponding  free knot $|K|$ is
trivial, and hence {\em slice}.
\end{ex}

So, if a free knot is {\em non-slice}, then so is {\em every} flat
virtual knot corresponding to it. The problem of finding {\em
non-slice} free knots is rather complicated. Another definition of
cobordisms (having another meaning), is given in \cite{IM} (it
follows Turaev's definition for cobordisms of words \cite{Tu}). In
this paper, two free knots are called {\em cobordant} if one can be
obtained from the other by a sequence of combinatorial moves. Some
invariants of such cobordisms are constructed, and non-sliceness of
knots is proved. We shall investigate the relation between those
{\em combinatorial} cobordisms and {\em topological} cobordisms
studied in the present paper, in a consequent paper.

In the work of Carter \cite{Ca} and Turaev \cite{Tu}, topological
sliceness obstructions for immersed curves were studied: for each
double point $x$ of an immersed curve $\Gamma$, one considers the
homology class of the halves $\Gamma_{x,1}$ for different vertices
$x$, and takes the homological pairing of these halves in the
surface. This approach can not be applied to free knots because a
framed four-valent graph is not assumed to be embedded in {\em any}
$2$-surface. Moreover, embeddings into different $2$-surfaces may
crucially change the intersection form for ``halves'' even with
$\Z_{2}$-coefficients.

\section{Parity of Curves in $2$-surfaces}

\label{lbl}

Let us now pay more attention to the structure of cobordisms of free
knots. Assume there is a cobordism $g:{\cal D}\to D$ (of genus zero)
spanning the free knot $K=g(\partial D)$.

Set $\Psi=g^{-1}({\bar \Sigma})$. Then $\Psi$ has a natural
stratification containing strata of dimensions zero and one. The
strata of dimension $0$ are {\em double points on the boundary},
{\em cusps}, and {\em triple points}. By a {\em double line} we mean
a minimal (with respect to inclusion) collection of $1$-dimensional
strata possessing the following properties:

1) two strata attaching the same cusp belong to the same double
line;

2) two strata attaching the same triple point from {\em opposite}
sides belong to the same double line, see Fig. \ref{dlinestpoints}.

\begin{figure}
\centering\includegraphics[width=200pt]{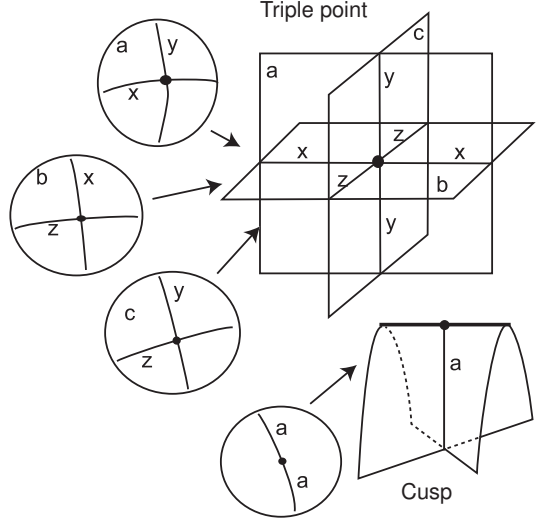}
\caption{Double lines $a,x,y,z$,cusps and triple points}
\label{dlinestpoints}
\end{figure}

Let $x\in K\cap \Sigma$ be a double point on the boundary of
$\partial D$. Assume $g^{-1}(x)=\{x_{1},x_{2}\}$.

Recall the definition of the Gaussian parity for a vertex $x$ of the
framed graph: we take an arc ${\iota}$ of $K$ connecting $x$ to $x$,
being the image of an arc ${\tilde \iota}$ on $S$,  and count the
parity of the double point number on ${\iota}$.

Now, let us consider the preimage ${\tilde \iota}\subset S$
connecting $x_{1}$ to $x_{2}$. Then $p(x)$ can be reformulated as
the parity of the set $Card(\tilde \iota\cap \Sigma)$. Note that
this line ${\tilde \iota}$ belongs to the boundary of the disc
${\cal D}$.

Note that in this definition we may choose ${\iota}$ to be any of
the two halves of the circle, and the resulting parity will not
depend on that choice. Moreover,  we take a path from $x_{1}$ to
$x_{2}$ locally directed towards the same half at both points, and
count the number of double points inside it.

Thus,  instead of ${\tilde \iota}\subset {\partial {\cal D}}$ we may
take an arbitrary path $\eta\subset  {\cal D}$ in generic position
to ${\bar \Sigma}$ connecting $x_1$ to $x_2$: we assume that
$\eta\cap {\bar \Sigma}=\eta\cap \Sigma_{2}$ consists only of
transverse intersections. All such curves lie on a disc ${\cal D}$
and have the same fixed ends, so they are all homotopic (rel.
boundary) to each other. The only thing we have to fix is the
behaviour of the curve in the neighbourhood of $x_{1}$ and $x_{2}$.
When we take ${\tilde \eta}\subset {\cal D}$, then the
neighbourhoods ${\tilde \eta}\cap U(x_{1})$ and ${\tilde \eta}\cap
U(x_{2})$ belong to the same half of the circle $S$. In terms of
${\cal D}$ and $D$, this can be reformulated as follows.

Let $\zeta\subset D$ be the $1$-stratum in $D$ attaching the point
$x$. Orient $\zeta$ arbitrarily, and orient the two preimages
$\zeta_{1}\cap U(x_{1})$ and $\zeta_{2}\cap U(x_{2})$ accordingly.
 Consider the two
vectors $v_{1}$ and $v_{2}$ tangent to ${\tilde \iota}$ at $x_{1}$
and $x_{2}$. We require that the bases $({\dot \zeta_{1}}, v_{1})$
and $({\dot \zeta_{2}}, v_{2})$ generate two different orientations
of the disc ${\cal D}$. If we change the direction of both $x_{1}$
and $x_{2}$ it will not change the parity of ${\tilde \iota}\cap
\Sigma_{2}$; here ${\dot \zeta_{i}}$ stays for the unit tangent
vector to $\zeta_{i}$, see Fig. \ref{paritaet}.

\begin{figure}
\centering\includegraphics[width=300pt]{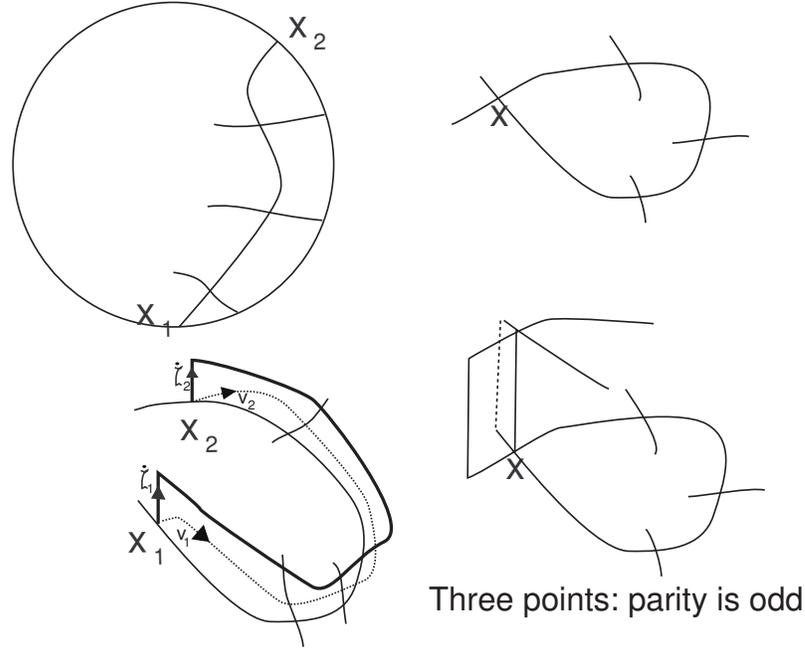} \caption{The
geometric way for defining parity} \label{paritaet}
\end{figure}

So, this gives another way to define a parity for $x$. Consider it
as the defintion of {\em any} double point.

Now, we can do the same for any arbitrary point on the double line
$x$. We call it {\em the Gaussian parity of a vertex} $x$.

The following Statement easily follows from the definition:
\begin{st}
The Gaussian parity is constant along double lines.\label{gp}
\end{st}

\begin{proof}
This is evident for two points belonging to the same $1$-stratum and
for points on two strata attaching the same cusp. When passing
through a triple point, the parity does not change, see Fig.
\ref{doesnotchange}. We see that the curve connecting the two
preimages of $A$ is parallel to the curve connecting the two
preimages of $B$ everywhere except for the two small domains; inside
these two domains, we have two intersections with double lines $p$
and $q$ which cancel each other.
\end{proof}

\begin{figure}
\centering\includegraphics[width=200pt]{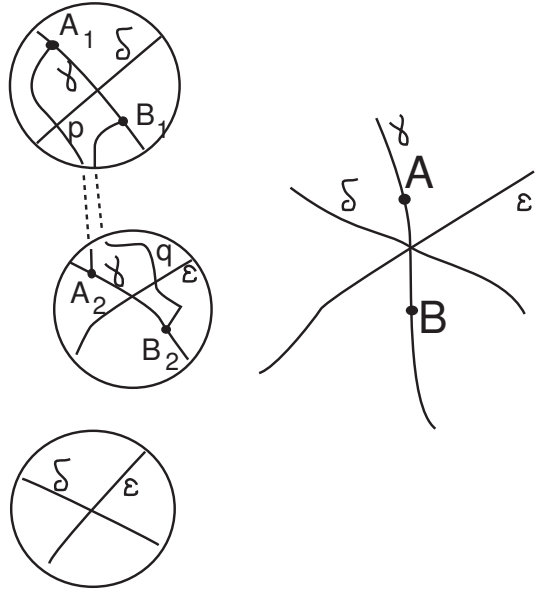}
\caption{The behaviour of parity and justified parity when passing
through a triple point} \label{doesnotchange}
\end{figure}

This leads to the definition of parity for a double line:
\begin{dfn}
Take an arbitrary generic (non-triple and not a cusp) point $x$ on a
double line $\gamma$, and consider the two preimages $x_1$ and $x_2$
of it on ${\cal D}$. Connect $x_1$ to $x_2$ by a generic path
${\tilde \eta}$, such that the behaviour of $\gamma$ in
neighbourhoods $U(x_1)$ and $U(x_2)$ is coordinated (as in the
definition of Gaussian parity). Now, the {\em Gaussian parity} of
the double line containing $x$ is the parity of the number of
intersection points between ${\tilde \eta}$ and $\Psi$.
\end{dfn}

This allows to define the set $\Psi_{even}$ to consist of all those
points of $\Psi$ beloning to {\em even} double lines.

Now, to define the {\em justified Gaussian parity}, we should take
an {\em odd double line} $\gamma$, an arbitrary generic point $x$ on
it, and consider the two preimages $x_{1}$ and $x_{2}$ of $x$. Then
we connect $x_1$ to $x_2$ by a generic path $\delta\in {\cal D}$ and
count the number of intersections between $\delta$ and
$\Psi_{even}$. If this number is even, we say that the $1$-stratum
of the odd double line containing $x$ is of the first type;
otherwise we say that this $1$-stratum is of the second type. Note
that the definition of {\em type} is even easier than the definition
of parity: we should not care about the local directions at the
endpoints since we do not count the number of intersections with
{\em odd} lines.

One immediately gets the following
\begin{st}
The Gaussian justified parity is constant on $1$-strata belonging to
$\Psi$. It changes from $b$ to $b'$ or from $b'$ to $b$ when passing
through a triple point formed by two odd double lines and one even
double line.\label{gpp}
\end{st}

\begin{proof}
The invariance along $1$-stratum is evident. Now view Fig.
\ref{doesnotchange}. Assume the double $\gamma$ and $\delta$ are odd
and the line $\epsilon$ is even. Then the two lines connecting the
preimages of $A$ and the preimages of $B$ are ``parallel'' except
for two domains where one of them passes through a double line at
$p$, and the other one passes through a double line at $q$. Since we
disregard intersection with {\em odd} double lines, we see that $q$
counts and $p$ does not.
\end{proof}

Having defined the {\em Gaussian parity} and {\em Gaussian justified
parity} in this way, we will be able to construct the invariant $L$
for any {\em section} of a cobordism $D$.

\section{Sliceness of Free Knots. The Main Theorem}

It turns out that the invariant $L$ of free knots is an {\em
obstruction to slicness}.

Before proving the main statement, we shall make several
observations concerning sliceness. By Lemma \ref{sgcobord}, it
follows that the slice genus is well defined on the set of free
knots.

The following statement is trivial.

\begin{st}
If a framed  $4$-graph $K'$ is equivalent to a framed $4$-graph $K$
(by Reidemeister moves), then the slice genus of $K'$ is equal to
that of $K$. In particular, if $K$ is slice then so is $K'$.
\end{st}

Thus, it makes sense to talk about {\em cobordisms of free knots},
not merely about cobordisms of framed $4$-graph.

\begin{figure}
\centering\includegraphics[width=200pt]{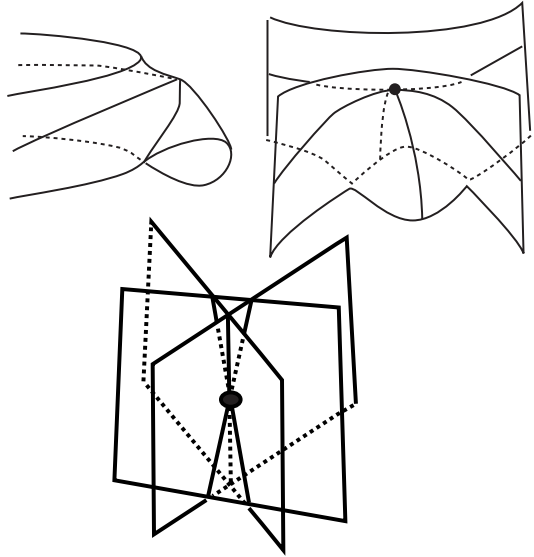}
\caption{Elementary cobordisms for Reidemeister moves}
\label{rmovescobord}
\end{figure}

As a corollary, we get the following
\begin{st}
If a framed $4$-graph $K$ is embeddable in $S^{2}$ or $T^{2}$ then
$K$ is slice.
\end{st}

Indeed, every framed $4$-graph on $S^{2}$ is equivalent (as a flat
knot) to the flat unknot; every framed $4$-graph on $T^{2}$ is
homotopic either to the trivial loop on the torus (with no
crossings) or to a non-separating loop without crossings (one may
think of such a loop as the longitude or the meridian in some
coordinate system), so, it is an unknot as well. Consequently, all
such knots are trivial in the free knot category.

\newcommand{\f}{{\cal F}}

\begin{st}
Let $K$ be a free knot, and let ${\f}(K)$ be a free knot obtained
from $K$ by deleting {\em odd} crossings. If $K$ is slice then so is
$\f(K)$.\label{deleteodd}
\end{st}

\begin{proof}
Indeed, any cobordism for $K$ generates a cobordism for $\f(K)$
obtained by separating all {\em odd double lines}: two points from
$\Sigma_{2}$ will be pasted together only if they lie on an even
double line. Note that this makes no contradiction with triple
points: a triple point either survives if it belongs to three even
double lines, or one sheet becomes disjoint from the two other ones
if two of the three double lines are odd.
\end{proof}

Denote the obtained cobordism for $\f(K)$ by $\f(D)$.

The main result of the present paper is the following

\begin{thm}[Main Theorem]
If a free knot $K$ has $L(K)\neq 0$, then $K$ is not
slice.\label{mthm}
\end{thm}

In particular, this yields the following

\begin{crl}
Let  $\gamma$ be a curve immersed in an oriented closed $2$-surface
$S_{g}$. Then if for a free knot $\Gamma$ corresponding to $\gamma$
one has $L(\Gamma)\neq 0$ then the underlying $\gamma$ is not slice
as a flat virtual knot.
\end{crl}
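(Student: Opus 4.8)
The plan is to argue by contradiction using Morse theory on the spanning disc. Suppose $K$ is slice, and fix a genus-$0$ cobordism $g\colon\mathcal{D}\to D$ with $K=g(\partial\mathcal{D})$ as in Definition~\ref{frknslice}. First I would choose a Morse function $f\colon\mathcal{D}\to[0,1]$ in general position with respect to the singular set $\Psi=g^{-1}(\bar\Sigma)$, arranged so that $\partial\mathcal{D}=f^{-1}(1)$ is the top level and so that $f$ restricts to a Morse function on $\Psi$ as well. Each regular level $f^{-1}(t)$ is then a disjoint union of circles whose image under $g$ is a framed $4$-graph representing a multicomponent free link, the \emph{section} $s_t$. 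Using the parity and justified parity of double lines (Statements~\ref{gp} and~\ref{gpp}), every crossing of $s_t$ inherits a parity and a type directly from the double line on which it lies; this is exactly the data needed to form the word $\gamma$ and hence the integer invariant of each component. Because $\mathcal{D}$ has genus $0$, I expect the two well-definedness conditions flagged in the discussion of the invariant $L$ for links — that each component carries an even number of its own even crossings, and that the Gaussian parity of a crossing does not depend on the chosen half — to hold automatically; verifying this from planarity of the sections is the first technical step.

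Next I would orient all level curves coherently using the orientation of $\mathcal{D}$ and $\nabla f$, so that each component $c$ of each section acquires a well-defined \emph{signed} height $4m(c)\in\mathbb{Z}$ (the second Cayley-graph coordinate of $\gamma(c)$), and I would define the total $M(t)=\sum_c 4m(c)$ over the components of $s_t$. The heart of the proof is to show that $M(t)$ is constant across every critical level of $f$. The critical events fall into three families: (i) passages of $\Psi$-critical points, which realize the three Reidemeister moves (a cusp gives a first move, a tangency of double lines a second, a triple point a third); (ii) local maxima and minima of $f$ away from $\Psi$, which create or annihilate a crossingless circle; and (iii) saddles of $f$ away from $\Psi$, which merge two components or split one. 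For family (i), the invariance of each $4m(c)$ is precisely Theorem~\ref{th1} applied to the parity we have defined, since by Statements~\ref{gp} and~\ref{gpp} the parities and types assigned to the surviving crossings are unchanged by the move. Family (ii) adds or deletes a component with empty word, leaving $M(t)$ untouched.

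The main obstacle is family (iii), the saddle moves, and this is where the lifting of the parity to double lines does the real work. At a merge the two component words $w_1,w_2$ are joined by a band into a single word; because the parity and type of every crossing are read off from its double line and are not recomputed from the instantaneous section, the former inter-component crossings and all self-crossings keep their letters, so the new element is $\gamma(c_1)\gamma(c_2)$ up to conjugacy, and in $G$ the product of two elements of the form $(0,4m_1)$ and $(0,4m_2)$ has height $4m_1+4m_2$; thus $M(t)$ is preserved, and a split is the same computation read backwards. The delicate point to check here is that a saddle turns self-crossings into inter-component crossings and conversely, without violating the well-definedness established in the first step; I expect this to follow because on the genus-$0$ disc the number of crossings shared between the two new components is forced to be even, which keeps the first Cayley-graph coordinate of each new component equal to zero.

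Finally, I would compare the two ends of the Morse function. At $t=1$ the section is $K$ itself, so $M(1)=\pm L(K)\neq 0$ by hypothesis, whereas for $t$ below the lowest critical value the section is empty and $M=0$. Since $M$ is constant across all critical levels, this is a contradiction, proving that $K$ cannot be slice. The only genuinely subtle ingredients are the genus-$0$ well-definedness of the per-component invariant and the saddle computation; the orientation bookkeeping needed to make $M$ a signed integer, and the reduction of family (i) to Theorem~\ref{th1}, are routine given the double-line parity developed in Section~\ref{lbl}.
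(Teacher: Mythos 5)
Your proposal contains one genuine mathematical gap, at exactly the step you identify as the heart of the argument. You claim that by orienting the level curves coherently (via the orientation of $\mathcal{D}$ and $\nabla f$) each component $c$ of a section acquires a well-defined \emph{signed} height $4m(c)$, and that a saddle merge then produces height $4m_1+4m_2$. This fails: the element $\gamma(K_{c,j})\in G$ is well defined only up to conjugacy, and conjugation of $x=(bb')^{l}$ by any of the generators $a,b,b'$ yields $x^{-1}$ — that is, sliding the base point past a single chord end negates the height. An orientation fixes the cyclic word but not the base point, so no choice of orientations makes the signed height well defined, and consequently the merged element has height $\pm 4m_1\pm 4m_2$ with uncontrolled signs (this is precisely the form $k=\pm n\pm m$ in the paper's Morse-bifurcation lemma, where the product rule in $G$ is stated as $(\pm u_1\pm v_1,\pm u_2\pm v_2)$). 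The paper sidesteps this in genus zero by never introducing signs at all: it labels the edges of the Reeb graph with the \emph{unsigned} values $|l|$, observes that the Reeb graph of a Morse function on the disc is a \emph{tree} whose non-root leaves all carry label $0$, and propagates $0$ to the root through the relations $\pm k\pm l\pm m=0$. Note that your conserved-sum argument never uses tree-ness of the Reeb graph; if it were valid as written it would exclude cobordisms of arbitrary genus, and indeed the paper does prove such a statement (Theorem~\ref{newth}) — but there the signs are fixed by a checkerboard colouring of $\mathcal{D}_{g}$ (the colour of the base point determines the sign, $l_b=-l_w$), not by orientations. Since a disc is automatically checkerboard colourable with respect to $\Psi$, your argument can be repaired either by importing that colouring device or, more simply, by dropping signs and invoking the tree structure as the paper does for Theorem~\ref{mthm}.

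A second, smaller point: the statement you were asked to prove is the corollary about a curve $\gamma$ immersed in $S_g$ being non-slice \emph{as a flat virtual knot}, and the paper's own proof of it is a one-line reduction: a general-position image of an immersed spanning disc in a $3$-manifold is an abstract spanning disc with only generic singularities (double lines, triple points, cusps), so flat sliceness of $\gamma$ implies free-knot sliceness of $\Gamma$, contradicting Theorem~\ref{mthm} when $L(\Gamma)\neq 0$. Your proposal starts from ``suppose $K$ is slice'' in the sense of Definition~\ref{frknslice} and thus re-proves the Main Theorem instead; the bridging step from the $3$-manifold picture to the abstract cobordism, which is the entire content of the corollary, is absent. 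The rest of your outline — parities and types read off from double lines, invariance under the Reidemeister families via Theorem~\ref{th1}, evenness of inter-component crossings from the fact that each section component lifts to a circle in $\mathcal{D}$ meeting $\Psi$ (and $\Psi_{even}$) evenly — matches the paper's proof of Theorem~\ref{mthm} correctly.
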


Indeed, a general position image of the disc in a $3$-manifold  is a
spanning disc having only three-dimensional singularities. The
converse statement is, generally, not true.

The problem of finding obstructs for a surface $S_{g}$ with a curve
$\gamma$ to span a disc immersed in a $3$-manifold was studied by
Carter \cite{Ca}, Turaev,  \cite{Tu} etc. Some topological
obstructions based on homology of $S_{g}$ were constructed.

In the present paper we consider a more complicated problem: instead
of curves in $2$-surfaces we consider framed four-valent graphs, and
instead of spanning $2$-discs in $3$-manifolds we consider
``abstract'' spanning $2$-discs. However, as we see from the above
discussion concerning parity and homology and the formulation of the
main theorem, the notion of parity which plays the role of
``subsitute of homology of $S_g$''.

\subsection{Constructing the Morse function and the Reeb graph}

The proof will consist of several steps.

We shall adopt the following notation for the maps: by $g$ we shall
mean the map ${\cal D}\to D$ corresponding to the cobordism, and by
$f$ we shall denote either of the two maps: the Morse function
$f:D\to \R$ (see definition ahead) and the composition $g\circ
f:{\cal D}\to \R$ will be also denoted by $f$ (abusing notation).

Assume the knot $K$ is slice and admits a cobordism  $g:{\cal D}\to
D$ (of genus zero).

\begin{dfn}
By a {\em Morse function} on $D$ we mean a Morse function $f:{\cal
D}\to [0,\infty)$ such that if $g(x)=g(y)$ then $f(x)=f(y)$, all
Reidemeister move points:  triple points, cusp points and tangency
points on ${\cal D}$, lie on non-critical levels of $f$, and
$f^{-1}(0)=K, f^{-1}(1)=\emptyset$. By abuse of notation we shall
denote the function on ${\cal D}$ and the function on $D$ by the
same letter $f$. By a {\em non-singular} value of the function $f$
we mean a noncritical value $X$ such that $f^{-1}(X)\subset {\cal
D}$ contains no cusps and no triple points. A Morse function on $D$
will be called {\em simple} if every singular level contains either
exactly one critical point or exactly one triple point or exactly
one cusp point.
\end{dfn}

From now on, we require that the Morse function of $D$ is simple and
the level $0$ is non-singular. It is clear that such Morse functions
are {\em everywhere dense in the class of all functions}. Every
Morse function has singular levels of two types: those corresponding
to Morse bifurcations (saddles, minima, and maxima) and those
corresponding to Reidemeister moves (corresponding to cusps,
tangency points and triple points). Denote singular levels of the
function $f$ by $c_{1}<\dots<c_{k}$ and choose non-critical levels
$a_{i}$: $0=a_{0}<c_{1}<a_{1}<c_{2}<\dots a_{k}<c_{k}<a_{k+1}=1$.

Let us construct the {\em Reeb graph} $\Gamma_{f}$ (molecula) of the
function $f$ as follows.  The univalent vertices of the Reeb graph
will correspond to minima and maxima of the function $f$; the
vertices of valency three will correspond to saddle points; edges
will connect critical points; every edge will correspond to a
cylinder $S^{1}\times I\subset {\cal D}$ which is continuously
mapped by $f$ to a closed interval between some two critical point;
this cylinder has no Morse critical points inside. One edge will
emanate from the point $0$ corresponding to the circle $S=\partial
{\cal D}$, see Fig. \ref{Reeb}.

\begin{figure}
\centering\includegraphics[width=200pt]{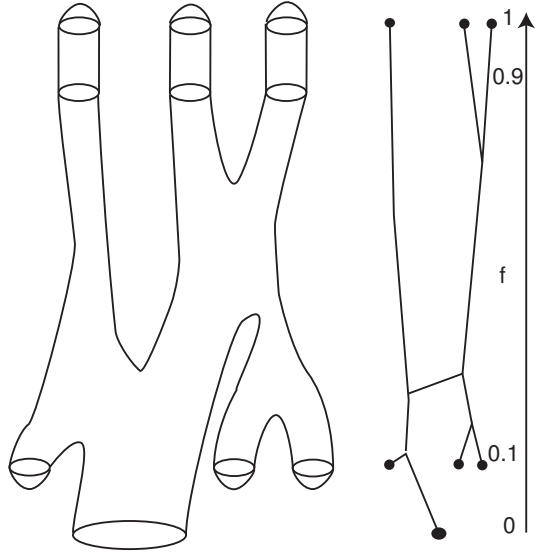} \caption{The Reeb
graph and circle bifurcations} \label{Reeb}
\end{figure}

Since this graph is a Reeb graph of the Morse function on a disc
${\cal D}$, the graph $\Gamma_{f}$ is a {\em tree}.

Our next goal is to endow each edge of the Reeb graph with a
non-negative integer {\em label}. The label of the edge emanating
from $0$ will coincide with $L(K)$.

For every non-singular level $c$ of $f$ the preimage
$K_{c}=f^{-1}(c)\subset D$ is a free link; when passing through a
Reidemeister singular point, the link $K_c$ is operated on by the
corresponding Reidemeister move; when passing through a Morse
critical point it gets operated on by a Morse-type bifurcation.
Every crossing of $K_{c}$ belongs to some double line of $D$. Define
the {\em parity}  of the crossing to be the Gaussian parity of the
double line, it belongs to. Analogously, define the {\em justified
parity} of a crossing to be that of the stratum it belongs to. One
can easily see that if a section of the Morse function is a framed
$4$-valent graph with one unicursal component then the parity and
justified parity coincide with the Gaussian parity and justified
parity defined directly via Gauss diagram.

Consider the free link $K_{c}$ and orient its components arbitrarily
(as we shall see further, the orientation will be immaterial); for
every unicursal component $K_{c,j}$ of the free link $K_{c}$ we may
define the conjugacy class $\delta(K_{c,j})$ of $\gamma(K_{c,j})$ in
$G$ just as it is done for free knots with Gaussian parity and
justified parity. Let $\delta(K_{c})$ be the unordered collection of
all $\delta(K_{c,j})$ for all $j$ (with repetitions).

From Statements \ref{gp} and \ref{gpp} we get the following
\begin{lm}
The parity and justified parity defined on the set of $K_{c}$
satisfy the parity and justified parity axioms under those
Reidemeister moves which happen within the cobordism $D$.
\end{lm}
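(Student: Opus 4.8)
The plan is to treat the four classes of singular Reidemeister levels separately, matching each with its geometric origin in the cobordism: a \emph{cusp} level realizes a first Reidemeister move, a \emph{tangency} level (a critical point of $f$ restricted to a double line) realizes a second Reidemeister move, and a \emph{triple point} level realizes a third Reidemeister move. In every case the parity and justified parity of the crossings of $K_c=f^{-1}(c)$ are, by definition, read off from the double line (resp.\ $1$-stratum) that each crossing sits on, so the bulk of the verification reduces to the constancy results already in hand, namely Statement \ref{gp} (Gaussian parity is constant along double lines) and Statement \ref{gpp} (the justified parity is constant on $1$-strata and flips $b\leftrightarrow b'$ exactly at a triple point of two odd and one even double line). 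I would first dispose of the ``away from the move'' axioms (axiom $4$ and justified-parity axiom $3$): the double lines and $1$-strata not meeting the singular level are unaffected, and the surviving crossings of $K_c$ correspond to the same strata after the move, so their parity and type are preserved by Statements \ref{gp} and \ref{gpp} directly.

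Next I would handle the cusp and tangency levels. At a cusp the two $1$-strata of the created crossing belong to a single double line terminating at the cusp; the two preimages of a point near the cusp are joined by a short coordinated path that crosses no other stratum of $\Psi$, so that double line is \emph{even}, which is exactly the first parity axiom. At a tangency level the bigon that is born consists of two crossings lying on one and the same double line (the one whose height function has a local extremum there), and in fact on the same smooth $1$-stratum of it; hence by Statement \ref{gp} the two crossings have equal parity, and by Statement \ref{gpp} equal type, giving the second parity axiom together with the first justified-parity axiom.

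The triple point case is where the real content lies. The three crossings of the third Reidemeister move sit on the three double lines passing through the triple point, and since strata attaching a triple point from opposite sides belong to the same double line, each of the three lines passes through the triple point; Statement \ref{gp} then gives axiom $3$a (corresponding crossings before and after the move share a parity), while Statement \ref{gpp} gives the justified-parity axiom $2$ (when two of the lines are odd, each changes type across the triple point). What remains is axiom $3$b: among the three double lines through a triple point, the number of odd ones is even. \textbf{This is the main obstacle.} The intended argument is to express the sum of the three parities as the mod-$2$ intersection number of a single closed loop with $\Psi$: choosing coordinated connecting paths $\eta_{12},\eta_{23},\eta_{31}$ across the three double lines near the triple point and closing them up by short arcs running along the three sheets at the preimages $T_1,T_2,T_3$, one obtains a loop $\lambda\subset{\cal D}$ whose intersections with $\Psi$ are, up to the controllable contributions of the connecting arcs, the points counted by $p(\ell_{12})+p(\ell_{23})+p(\ell_{31})$.

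Since ${\cal D}$ is a disc, $H_1({\cal D};\Z_2)=0$, so the closed loop $\lambda$ is null-homologous and therefore meets the $1$-dimensional set $\Psi$ an even number of times mod $2$; this forces $p(\ell_{12})+p(\ell_{23})+p(\ell_{31})=0$, i.e.\ axiom $3$b. The delicate point I expect to spend the most care on is the bookkeeping of the coordination conditions at the three vertices of $\lambda$ and of the short arcs along the sheets at $T_1,T_2,T_3$, verifying that they contribute evenly and do not spoil the identification of $|\lambda\cap\Psi|$ with the sum of the three parities; the local model at the triple point shown in Fig.\ \ref{doesnotchange} makes this a finite, explicit check, essentially the same cancellation of the two spurious intersection points already exploited in the proofs of Statements \ref{gp} and \ref{gpp}. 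Once axiom $3$b is secured, all parity and justified-parity axioms hold for the Reidemeister moves occurring inside the cobordism, completing the lemma.
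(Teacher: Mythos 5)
Your proposal is correct, and it does more work than the paper itself, which derives this lemma in one line (``From Statements \ref{gp} and \ref{gpp} we get the following'') without any case analysis: the paper's implicit argument is exactly your reduction of axioms $3$a, $4$ and the justified-parity axioms to the constancy of parity along double lines (Statement \ref{gp}) and the type-flip rule at triple points (Statement \ref{gpp}), together with the standard identification of cusp, tangency and triple-point levels with the three Reidemeister moves. Where you genuinely diverge is in isolating axiom $3$b as the crux and proving it: the paper never verifies that the number of odd double lines through a triple point is even, even though its later repetition of the proof of Theorem \ref{th1} (which only treats the cases ``three even'' and ``two odd, one even'') silently depends on it. Your argument --- closing up the three coordinated connecting paths $\eta_{12},\eta_{23},\eta_{31}$ into a loop $\lambda$, and using $H_1({\cal D};\Z_2)=0$ together with the fact that $\Psi$ is a $\Z_2$-cycle relative to $\partial{\cal D}$ (cusp preimages are interior smooth points of $\Psi$, triple-point preimages are transverse crossings, and double lines meet $\partial{\cal D}$ transversally) to force $p(\ell_{12})+p(\ell_{23})+p(\ell_{31})=0$ --- is sound, modulo the finite local check you flag: since the six endpoints of the $\eta$'s lie \emph{on} $\Psi$, one must perturb $\lambda$ off $\Psi$ there and verify, using the coordination condition and the quadrant positions of the closing arcs on the three sheets near $T_1,T_2,T_3$, that the correction terms are even; this is the same local computation as the cancellation of $p$ and $q$ in Fig. \ref{doesnotchange}, so it is routine but should be written out. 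A side benefit of your route is that it makes transparent exactly what fails for higher-genus cobordisms and why the paper's later hypotheses (vanishing of the classes dual to $\Psi$ and to the even double lines, i.e. the atomic and $2$-atomic conditions of Theorem \ref{newth}) are the correct substitutes for $H_1({\cal D};\Z_2)=0$.
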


This lemma, in turn, yields the following
\begin{lm}
For an interval $[a,b]\subset [0,1]$ containing Reidemeister
singular points and no Morse critical points (for a coordinated
choice of orientations of link components) we have
$\delta(K_{a})=\delta(K_{b})$ as collections of conjugacy classes of
elements from $G$ counted with multiplicities.

Moreover, $\delta(K_{0})$ is the conjugacy class of $\gamma(K)$.
\end{lm}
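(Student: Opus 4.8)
The plan is to reduce the statement to the single-knot invariance already established in Theorem \ref{th1}, applied component by component. First I would use the absence of Morse critical points on $[a,b]$: the preimage $f^{-1}([a,b])\subset{\cal D}$ is then a disjoint union of cylinders $S^{1}\times[a,b]$, one per unicursal component of the sections, so there is a canonical bijection between the components of $K_{a}$ and those of $K_{b}$, along which I transport the chosen orientations (this is precisely the ``coordinated choice'' in the statement). As $c$ increases from $a$ to $b$ and passes through the finitely many singular levels $c_{i}\in(a,b)$, each of which is a cusp, a tangency, or a triple point, the link $K_{c}$ is modified by the corresponding first, second, or third Reidemeister move, exactly as in the elementary cobordisms of Lemma \ref{sgcobord}.

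Next I would invoke the foregoing lemma, which guarantees that the parity and justified parity induced from the double lines of $D$ satisfy the parity and justified-parity axioms under precisely these moves. This is the hypothesis under which the computation in the proof of Theorem \ref{th1} runs: walking along a fixed component $K_{c,j}$ and recording a letter $a$, $b$, or $b'$ at each crossing it meets, I would check that each Reidemeister move alters the word $\gamma(K_{c,j})$ only by inserting a subword equal to the identity of $G$ (the $aa$ of a first move, the $uu$ of a second move) or by a substitution $ab\leftrightarrow b'a$, $ba\leftrightarrow ab'$, $bb\leftrightarrow b'b'$ realizing a defining relation of $G$ (a third move), while the identification across the cylinder amounts at worst to a cyclic permutation, i.e.\ a conjugation. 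Hence each $\delta(K_{c,j})$ is constant across $[a,b]$, and collecting over $j$ gives $\delta(K_{a})=\delta(K_{b})$ as multisets of conjugacy classes. Since reversing the orientation of a component sends $(0,l)$ to its conjugate $(0,-l)$, the multiset $\delta$ is in fact insensitive to the orientation choice.

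The step I expect to be the real obstacle is the bookkeeping of the word changes when the two or three crossings of a Reidemeister move are \emph{distributed among different components} rather than lying on one. For an inter-component second move, the two new crossings are consecutive along each of the two participating components, so each word again acquires an adjacent $uu$; for a third move whose crossings are spread over two or three components I would have to verify, case by case, that the relevant substitutions still correspond to relations of $G$ and that the type changes dictated by the justified-parity axioms match the $b\leftrightarrow b'$ swaps. I would also keep track of the parity-count condition ensuring each $\gamma(K_{c,j})$ has vanishing first coordinate, so that $\delta(K_{c,j})$ is genuinely described by an element $(0,l)$; the remarks preceding this section show this holds for sections of a disc cobordism, so no separate argument is needed.

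Finally, for the ``moreover'' clause I would specialize to $c=0$: there $K_{0}=f^{-1}(0)=K$ is a single component, and by the construction in Section \ref{lbl} (Statement \ref{gp}) the Gaussian parity and justified parity assigned to the double lines restrict on $\partial{\cal D}$ to the ordinary Gaussian parity and justified parity of the crossings of $K$. Consequently the recorded word $\gamma(K_{0})$ coincides with $\gamma(K)$ letter for letter, so $\delta(K_{0})=[\gamma(K)]$, as claimed.
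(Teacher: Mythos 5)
Your proposal is correct and takes essentially the same route as the paper, whose entire proof is the remark that one literally repeats the proof of Theorem \ref{th1} on each unicursal component, relying on the preceding lemma that the parity and justified parity induced from the double lines satisfy the parity axioms under the Reidemeister moves occurring inside the cobordism. Your additional care --- the cylinder structure of $f^{-1}([a,b])$ giving the canonical component bijection, the case of moves whose crossings are spread over several components, and the restriction of the double-line parity to the ordinary Gaussian parity at $c=0$ --- merely makes explicit what the paper leaves implicit, and introduces no divergence in method.
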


The proof literally repeats the proof of Theorem \ref{th1} applied
to each unicursal component of the link.

Now, we would like to treat $\delta$ as a collection of {\em
non-negative integers} rather than just conjugacy classes and to
forget about orientations of components of $K_{c}$. To this end, we
prove the following

\begin{lm}
Let $c$ be a non-singular level of $f$, and let  $K_{c,1}, \dots,
K_{c,n}$ be  unicursal components of the free link
$K_{c}=f^{-1}(c)\subset D$. Then for every $i=1,\dots,n$:

1) The total number of intersection points between $K_{c,i}$ and
$K_{c,j}, j\neq i$, is even.

2) The number of {\em odd} intersection points between $K_{c,i}$ and
$K_{c,j}, j\neq i$, is even.\label{evenodd}
\end{lm}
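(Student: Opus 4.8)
The plan is to prove both parity statements by relating intersection points of link components to the boundary behavior of the spanning surface.
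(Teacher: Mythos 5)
There is a genuine gap here: your proposal states the goal but contains no argument, and the one directional hint it gives --- ``boundary behavior of the spanning surface'' --- points away from where the proof actually lives. The crossings of a non-singular level $K_{c}$ sit in the \emph{interior} of the cobordism, and the paper's proof is a short interior intersection count mod $2$. Namely, the preimage $C_{i}=g^{-1}(K_{c,i})$ of each unicursal component in the disc ${\cal D}$ is an embedded closed circle (a component of a non-singular level of the Morse function upstairs), meeting the singular set $\Psi=g^{-1}({\bar \Sigma})$ transversally. Each crossing of $K_{c}$ has exactly two preimages on $\Psi$: a self-crossing of $K_{c,i}$ puts both of them on $C_{i}$, while a crossing between $K_{c,i}$ and $K_{c,j}$, $j\neq i$, puts exactly one on $C_{i}$. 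Since ${\cal D}$ is a disc, the circle $C_{i}$ bounds a subdisc, and every double line of $\Psi$ is either closed or an arc with endpoints on $\partial {\cal D}$ (continuing through cusps and triple points), so it crosses $C_{i}$ an even number of times; hence the number of points of $C_{i}\cap \Psi$ is even. Subtracting the self-crossing contribution, which is even because each self-crossing counts twice, leaves an even number of mixed crossings --- assertion 1). Assertion 2) is the identical count run against $\Psi_{even}$ (the points lying on even double lines): evenness of $C_{i}\cap \Psi_{even}$ gives that the even mixed crossings are even in number, and together with 1) so are the odd ones.

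So to complete your plan you would need precisely two ingredients, neither of which is supplied or even suggested by appealing to the boundary: (i) the bookkeeping that identifies mixed crossings with points of $C_{i}\cap\Psi$ counted once and self-crossings with points counted twice --- this is what makes the statement about \emph{pairs of distinct components} follow from a statement about a \emph{single} circle; and (ii) the reason a closed transverse circle in ${\cal D}$ meets $\Psi$, and separately $\Psi_{even}$, in an even number of points, which is a mod-$2$ homological (or Jordan-curve) fact about the disc, using that $\Psi_{even}$ is itself a union of closed curves and properly ended arcs because parity is constant along double lines (Statement \ref{gp}). Note also that assertion 2) silently requires the parity of a crossing of $K_{c}$ to be defined at all --- it is the parity of the double line through it, not a Gaussian count on the level link --- so any proof must be phrased in terms of the double-line structure on ${\cal D}$, not in terms of the boundary knot $K$.
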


\begin{proof}
The proof follows from the fact that the preimage of $K_{c,i}$ in
${\cal D}$ is a circle, and the intersection of a closed circle with
a set $\Psi$ (or $\Psi_{even}$) in ${\cal D}$ consists of an even
number of points.
\end{proof}

Lemma \ref{evenodd} immediately yields that every $\gamma(K_{c,i})$
for a non-singular $c$  is represented by an element $(0,l)$  on the
Cayley graph of $G$.

Let $L_{c}=\{l_{c,1},\dots, l_{c,m}\}$ be the collection of
integers (with repetitions) obtained from $\delta(K,c)$ by replacing
conjugacy classes with absolute values of their second coordinates.

Each $l_{c,i}\in \N\cup \{0\}$ corresponds to a component of the
free link $K_{c}$ and does not change under Reidemeister move when
changing $c$ without passing through Morse critical points.
Associate it with the corresponding edge of the graph $\Gamma_{f}$.

Now, let us analyze the behaviour of these labels $l_{c,i}$ at
vertices of the graph $\Gamma_{f}$.

\begin{lm}
Assume $K_{c-\eps}$ and $K_{c+\eps}$ differ by one Morse bifurcation
at the level $c$.

Then:

\begin{enumerate}

\item If this bifurcation corresponds to a birth of a circle then
$L_{c+\eps}$ is obtained from  $L_{c-\eps}$ by an addition of $0$;

\item If it corresponds to a death of a circle then $L_{c+\eps}$ is
obtained from $L_{c-\eps}$ by a removal of $0$;

\item In the case of fusion all elements of $L_{c-\eps}$ except two ones ($m$ and
$n$) remain the same, and the elements $m$ and $n$ turn into some
$k=\pm n\pm m$ to form an element of $L_{c+\eps}$.

\item The fission operation is the inverse to the fusion: instead of
one element $k$ one gets a pair of elements $m,n$ such that $\pm
m\pm n=k$.\label{klm}
\end{enumerate}

\end{lm}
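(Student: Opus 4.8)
The plan is to read off, for each non-singular level and each component $K_{c,i}$, the element $\gamma(K_{c,i})\in G$ exactly as in Theorem~\ref{th1}, and then to track the second Cayley-graph coordinate through each of the four local models of a Morse bifurcation. By Lemma~\ref{evenodd} every such element lies at a point $(0,l_{c,i})$, so $l_{c,i}$ is, up to sign, its second coordinate; reversing the orientation of a component negates this coordinate, so only the absolute value is intrinsic, which is precisely the datum recorded in $L_c$. The births and deaths are then immediate: a Morse minimum (resp.\ maximum) creates (resp.\ destroys) a small embedded circle carrying no crossings, whose associated word is empty and whose element is $(0,0)$; hence $L_{c+\eps}$ differs from $L_{c-\eps}$ by the insertion (resp.\ deletion) of a single $0$, proving parts 1 and 2.

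For the fusion (part 3), the first step is to observe that the saddle point is \emph{not} a point of $\bar\Sigma$: since $f$ is a simple Morse function, the saddle level contains no cusp and no triple point, and the saddle is a Morse critical point of the immersion with a single preimage, so it lies off every double line. Consequently, passing through the saddle crosses no double line, and by Statements~\ref{gp} and~\ref{gpp} the parity and the justified type of every crossing of the level link are unchanged, while no crossing is created or destroyed. The two circles $C_1,C_2$ (labels $m,n$) reconnect into a single circle $C$ which traverses all of $C_1$ and then all of $C_2$, so the inter-component crossings merely become self-crossings and the word of $C$ is, up to a choice of base point and of reading directions, the concatenation $\gamma(C_1)\gamma(C_2)$. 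Thus $\gamma(C)=g_1^{\pm1}g_2^{\pm1}$ in $G$, where $g_1=\gamma(C_1)$ lies at $(0,\pm m)$ and $g_2=\gamma(C_2)$ at $(0,\pm n)$.

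The remaining step is the group computation. Writing the factors as $(0,m_1)$ and $(0,m_2)$ with $m_1\in\{\pm m\}$, $m_2\in\{\pm n\}$, I trace the path of the second factor starting from $(0,m_1)$ rather than from the identity. Each $a$ only toggles the first coordinate, so the first factor still returns to $x=0$ (by Lemma~\ref{evenodd} its number of even letters is even), whereas each $b$- or $b'$-step reverses its vertical direction exactly when the constant offset $m_1$ is odd; hence the product lands at $(0,m_1+m_2)$ if $m_1$ is even and at $(0,m_1-m_2)$ if $m_1$ is odd. In either case the second coordinate of $\gamma(C)$ equals $\pm m_1\pm m_2=\pm m\pm n$, so the single label $k$ produced at the saddle satisfies $k=\pm m\pm n$. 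Part 4 is simply the same local model viewed with the two sides interchanged: the single-circle word is still the concatenation of the two-circle words, so the labels $m,n$ arising from $k$ satisfy $\pm m\pm n=k$.

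I expect the fusion case to be the main obstacle. The two delicate points are verifying that the merged word is genuinely the concatenation --- that the reconnection neither reorders nor re-types the crossings, which is exactly where the simplicity of $f$ together with Statements~\ref{gp}--\ref{gpp} is used --- and the parity-dependent sign in the Cayley-graph product, which is what forces the answer to be the ambiguous $\pm m\pm n$ rather than $m+n$ alone.
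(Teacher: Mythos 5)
Your proof is correct and follows essentially the same route as the paper's: births and deaths correspond to a crossing-free circle whose word is empty, and at a saddle the word of the merged circle is the concatenation of the two component words (the paper states this in the fission direction as $w=w_{XY}w_{YX}$), after which the multiplication rule in $G$ gives $k=\pm m\pm n$. The extra details you supply --- that the saddle lies off $\bar\Sigma$ so parities and justified types of crossings are undisturbed, and the explicit parity-dependent sign $(0,m_1)\cdot(0,m_2)=(0,\,m_1+(-1)^{m_1}m_2)$ on the Cayley graph --- are precisely the steps the paper leaves implicit in its one-line appeal to the rule that $u\cdot v$ has coordinates $(\pm u_1\pm v_1,\pm u_2\pm v_2)$.
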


\begin{proof}
The first two assertions are obvious: a trivial circle has no
points, thus the corresponding element of $G$ is the unit of $G$ and
the label is equal to $0$.

The last two assertions follow from the following observation. If a
circle $C$ with a marked point $X$ splits into two circles by a
bifurcation connecting $X$ to some point $Y$, then the corresponding
word $w\in G$ splits into the product $w=w_{XY}w_{YX}$.

The rest of the proof follows from the multiplication rule in $G$:
for elements $u,v\in G$ having coordinates $(u_{1},u_{2})$ and
$(v_{1},v_{2})$, respectively, the product $u\cdot v$ has
coordinates $(\pm u_{1}\pm v_{1},\pm u_{2}\pm v_{2})$.

\end{proof}

This leads to the following way of proving the main theorem. The
graph $\Gamma_{f}$ has all vertices except one (corresponding to the
initial knot $K$) having label $0$. At each vertex, the two labels
(with signs $\pm$) sum up to give the third label. Thus, taking into
account that $\Gamma_{f}$ is a tree, we see that the label of the
initial vertex is $0$, so $L(K)=0$. A contrapositive completes the
proof of Theorem \ref{mthm}.

\begin{ex}
Consider the free knot $K_1$ shown in Fig. \ref{fig1}. By Theorem
\ref{mthm} it is not-slice. Thus, all flat knots $K$ with underlying
free knot $K_1$ are not slice, either.
\end{ex}

\section{Cobordisms of Higher Genus}

The methods used for proving that $L$ is a sliceness obstruction, do
not work immediately for estimating the slice genus. There are two
reasons when we use the fact that the spanning surface is the
sphere. First, when we define the {\em parity} and {\em justified
parity} of the double lines on ${\cal D}$, we use an arbitrary curve
connecting some two points (with some constraints on the direction
at the final points) and say that all such curves are homotopic. For
arbitrary spanning surface ${\cal D}_{h}\to D_{h}$ it is not the
case.

So, in order to define the parity of double lines, one will have to
impose some obstructions on the spanning surface: one has to require
that the cohomology class dual to $\Psi$ is $\Z_{2}$-trivial. The
matter of $\Z_{2}$-trivial homology classes and even-valent graphs
in knot theory is closely related to {\em atoms}, see \cite{Atoms}.

The other problem comes from the fact that the Reeb graph
corresponding to a Morse function lying on a surface having non-zero
genus, is not necessarily a tree.

So, starting with a free knot $K$ with, say, $L(K)=8$, one may
(principally) split this free knot into the free link consisting of
two free knots $K_{1}$ and $K_{2}$ with $L(K_{i})=4$ and then merge
these free knots to get an unknot (since $4$ and $-4$ sum up to give
$0$ which is the value of $L$ on the unknot).

In the present section, we say that how to overcome these two
difficulties for cobordisms (of arbitrary genus) of certain type.

Let ${\cal D}_{g}$ be a surface with boundary $S^1$, and let
$\Sigma_{2}$ be the set of double points of ${\cal D}_{g}$.
Obviously, $\Sigma_{2}$ defines a relative $\Z_{2}$-homology class
$\kappa$ of ${\cal D}_{g}\slash S^{1}$. This homology class is an
obstruction for the surface to be checkerboard-colourable; also,
this is an obstruction for well-definiteness of even/odd double
lines.

Namely, if we look at the definition of an even/odd double line, we
see that there is an ambiguity in the choice of path connecting two
preimages of a generic point on the double line. For the case of a
disc cobordism, the parity of double lines is well defined, because
all such curves are homotopic. For ${\cal D}_{g}$ the unique
obstruction to this well-definiteness is the class $\gamma$.

We call a cobordism of genus $g$ {\em checkerboard} (or {\em
atomic}) if the corresponding class $\gamma$ vanishes.

The next task (after detecting which double line is even and which
one is odd) is to make a distinction between $b$ and $b'$. To this
end, one should do the same for preimages of points lying on odd
double lines, connect them by a generic curve, and count the
intersection with {\em even} double lines. So, we see that the only
obstruction is the relative $\Z_{2}$-homology class $\gamma'$ of
${\cal D}_{g}\slash S^{1}$ generated by {\em even double lines}.

An example of an atomic (checkerboard) but not $2$-atomic cobordism
is shown in Fig. \ref{at2at}. Here the torus with a disc removed is
presented by a square (opposite edges are assumed identified). The
three pairs of lines (a dashed pair of lines, a thin line pair and a
thick line pair) represent the collection of double lines of the
cobordism. There is one triple point formed by the mutual
intersection of these lines. One can see that the torus with the
disc removed is checkerboard colourable, and that solid lines are
both odd, the dashed line is even. However, if we remove odd lines,
we are left with dashed lines, and the picture is not checkerboard
colorable any more.

\begin{figure}
\centering\includegraphics[width=200pt]{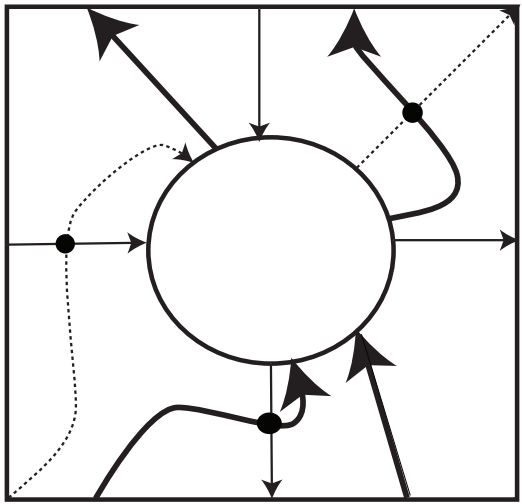} \caption{An
atomic cobordism which is not $2$-atomic} \label{at2at}
\end{figure}

We say that a checkerboard cobordism is {\em $2$-atomic} if
$\gamma'$ vanishes.

The aim of the present section is to prove the following

\begin{thm}
Assume for a $1$-component framed $4$-graph $K$ we have $L(K)\neq
0$. Then there is no {\em $2$-atomic} cobordism spanning the knot
$K$ of any genus.\label{newth}
\end{thm}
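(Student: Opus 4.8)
The plan is to reduce to the genus-zero argument of Theorem \ref{mthm} by showing that the \emph{$2$-atomic} hypothesis restores exactly the two ingredients that were used there but fail in general: (i) well-definedness of the Gaussian parity and justified Gaussian parity on the double lines of $\mathcal{D}_g$, and (ii) a coherent sign for the edge labels of the Reeb graph, which the tree structure provided for free in the disc case. Suppose, for contradiction, that $K$ admits a $2$-atomic cobordism $g:\mathcal{D}_g\to D_g$.

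First I would set up the same data as before. By definition of $2$-atomic the relative classes $\gamma$ and $\gamma'$ vanish, so the path-independence that was automatic on a disc now follows from $\Z_2$-triviality: the number of intersections of a connecting path with $\Psi$ (respectively with $\Psi_{even}$) depends only on its homotopy class, and triviality of $\gamma$ (respectively $\gamma'$) makes it independent of the chosen path. Hence the Gaussian parity and justified parity of double lines are well defined, and Statements \ref{gp} and \ref{gpp} hold verbatim. I then choose a simple Morse function $f$ compatible with $g$ and form the Reeb graph $\Gamma_f$; Lemmas \ref{evenodd} and \ref{klm} go through unchanged, so every edge carries a non-negative integer label, births and deaths insert and delete a $0$, and at each trivalent vertex the three labels admit signs summing to zero. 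The only new feature is that $\Gamma_f$ need not be a tree: its first Betti number can be as large as $g$.

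The heart of the argument is to promote the non-negative labels to a genuine $\Z$-valued conserved flow on $\Gamma_f$. Orienting the level-link components and recording the \emph{signed} second coordinate of $\gamma(K_{c,i})$ on the Cayley graph of $G$, each fusion or fission becomes an additive relation $l_k=l_m+l_n$ once the merging components are co-oriented; the residual sign ambiguity $\pm m\pm n$ of Lemma \ref{klm} is exactly the freedom to reverse a component's orientation or to interchange the types $b,b'$, and the sign contributed to the second coordinate is controlled by the parity and the $b/b'$-type of the double lines meeting at the saddle. Here $2$-atomicity enters decisively: vanishing of $\gamma$ makes the even/odd splitting of double lines consistent around every loop of $\Gamma_f$, and vanishing of $\gamma'$ makes the $b/b'$-type, hence the sign, consistent around every loop. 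Consequently the orientations of level components can be chosen coherently over all of $\Gamma_f$, so that every saddle relation is the signed conservation law with inflow equal to outflow.

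Once the labels form a conserved flow, the conclusion is immediate and insensitive to the cycles of $\Gamma_f$. Every univalent vertex other than the root is a minimum (birth) or a maximum (death), carrying label $0$ by Lemma \ref{klm}, while the root edge emanating from $0=\partial\mathcal{D}_g$ carries $L(K)$. The signed conservation law, summed over all univalent vertices, forces the total flux to vanish, whence $L(K)=0$, contradicting $L(K)\neq 0$; thus no $2$-atomic cobordism spans $K$. The main obstacle, and the step requiring the most care, is the coherent sign choice of the previous paragraph: one must verify that the sign appearing in the multiplication rule of $G$ is governed precisely by the parity and type carried by the double lines, and that the homological triviality expressed by $\gamma=\gamma'=0$ is equivalent to the vanishing of the monodromy of this sign around the cycles of the Reeb graph.
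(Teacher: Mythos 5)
Your overall reduction agrees with the paper's: $\gamma=\gamma'=0$ restores the well-definedness of the Gaussian parity and justified parity on double lines (so Statements \ref{gp}, \ref{gpp} and Lemmas \ref{evenodd}, \ref{klm} survive), and the task becomes promoting the unsigned labels to a signed conserved quantity. But at exactly that step --- which you yourself flag as ``requiring the most care'' and leave unverified --- there is a genuine gap. You propose to fix the signs by choosing orientations of the level components coherently, arguing that $\gamma=\gamma'=0$ kills the ``monodromy of the sign around the cycles of the Reeb graph''. This does not work as stated, because orientation is not the principal source of the ambiguity: the element $\gamma(K_{c,j})$ is defined only up to conjugacy, the conjugacy class of an element with coordinates $(0,2l)$ is $\{(0,2l),(0,-2l)\}$, and moving the base point of a component past a single crossing conjugates the word by one letter. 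Since each of $a,b,b'$ conjugates $x=(bb')^{l}$ to $x^{-1}$, every motion of the base point across an \emph{odd} number of double lines --- of either parity --- flips the sign. No choice of orientations, however coherent, removes this base-point dependence; and the vanishing of $\gamma$ and $\gamma'$, which governs the well-definedness of the parity and of the $b/b'$ type, is a different statement from the sign coherence you need, so your proposed equivalence between homological triviality and vanishing sign-monodromy is asserted, not established.

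The missing idea is the paper's checkerboard colouring. Since the cobordism is atomic, ${\cal D}_{g}$ cut along the preimage $\Psi$ of the double lines admits a black/white colouring; the base point of any section lies off $\Psi$ and so has a colour, and crossing $\Psi$ flips the colour exactly when it conjugates the word by one letter. Hence the signed label $l_{b}$ computed from a black base point is well defined, with $l_{b}=-l_{w}$, the orientations of level circles being induced from the orientation of ${\cal D}_{g}$. Fixing black once and for all, the sum of the $l_{b}$ over the components of a non-critical level is unchanged by Reidemeister moves and by every Morse bifurcation (births and deaths contribute $0$; at a saddle the signed labels genuinely add because the word of the merged circle is the product $w_{XY}w_{YX}$ read from a base point of fixed colour). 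This sum equals $\pm L(K)\neq 0$ at $t=0$ and $0$ at $t=1$, the desired contradiction. Note that this argument is local on the surface and never examines cycles of $\Gamma_{f}$: your correct observation that a conserved flow is insensitive to the Betti number of the Reeb graph is the easy half, while the hard half --- producing the signs --- requires the colouring, not a monodromy computation on $\Gamma_{f}$. If you replace your coherent-orientation step by this colouring argument, the rest of your proposal goes through essentially as in Theorem \ref{mthm}.
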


\begin{proof}
Let us revisit the proof of Theorem \ref{mthm}. Assume there is a
$2$-atomic cobordism ${\cal D}_{g}\to {\cal D}$ of some genus $g$
spanning the knot $K$. Fix a generic Morse function $f$ on ${\cal
D}_{g}$ (the corresponding Morse function on  $D_{g}$ will be
denoted by the same letter $f$).

Following the lines of the proof of Theorem \ref{mthm}, we see that:

1) at each non-critical level of $f$ we have a free link $L_{c}$ of
some number of components, and with each component we associate a
natural number coming from a conjugacy class in $G$.

2) these numbers behave nicely under Morse bifurcations, i.e., a
birth/death of a circle corresponds to an addition/removal of an
occurency of $0$;

3) for a saddle point the three numbers $k,l,m$ corresponding to the
adjacent edges satisfy $\pm k\pm l\pm m=0$.

Note that every saddle point merges two circles into one or splits
one circle into two circles (the Moebius bifurcation is impossible
because ${\cal D}_{g}$ is orientable).

Recall that each of these numbers $k$, $l$, $m$ at this point is
defined only up to sign (equivalently, we have an absolute value of
these three numbers), and this is not sufficient to prove the
theorem in the case of cobordism of arbitrary genus.

Now, let us be more specific and study these numbers on edges in
more detail. Let $t$ be a non-critical level of $f$, and let
$K_{1},\dots, K_{n}$ be the free knots composing the corresponding
free link $K_{t}$. For each of these knots $K_{j}$ we have defined
the integer number by taking an initial point of $K_{j}$. This
number $2l$ comes from an element of $G$ of the form $x=(bb')^{l}$,
and we see that a conjugation of $x$ by any of $a,b,b'$ takes $x$ to
$(b'b)^{l}=x^{-1}$. So, a conjugation by a word of {\em even length}
does not change the number $2l$, whence the conjugation by a word of
an {\em odd length} takes it to $-2l$.

Let us take a checkerboard colouring of ${\cal D}_{g}$ with respect
to the cell decomposition generated by double lines.

Whenever we take an initial point of any section, this initial point
does not belong to any double line, so, it has some colour, black or
white. We see that a conjugation by a word of even length does not
change the colour of the initial point.

This means for every component $K_{j}$, the numbers $l_{b}(K_{j})$
and $l_{w}(K_{j})$ are well defined and
$l_{b}(K_{j})=-l_{w}(K_{j})$, where subsripts $b$ and $w$ correspond
to the colour choice of the initial point on the circle.

Fix the colour black once forever. Then every edge of the Reeb graph
acquires an integer number $l_{b}$, and for every level when we
merge/split circles, the sum of these $l_{b}$'s does not change.

So, the sum $l_b$ remains invariant for every non-critical level of
the Morse function. Since it is non-zero at $t=0$, it will remain
non-zero for every $t$.

This completes the proof of theorem \ref{newth}.

\end{proof}

\section{Further Directions and Unsolved Problems}

\subsection{2-knots in 4-space and 4-manifolds}

The parity we have defined for double lines for the case of a disc
(see section \ref{lbl}) (or a surface with boundary) can be defined
just in the same way for a generic map from a $2$-surface to a
$4$-manifold.

According to \cite{Ma,Af}, lots of invariants of virtual knots can
be refined by using {\em parity}. Probably, invariants of $2$-knots
in $4$-spaces (say, the quandle) can be refined in the same way by
using parity of double lines for $2$-surfaces.

\subsection{Gaussian parity and other parities}
Another question is whether the same invariant $L$ counted by using
another parity and justified parity (not Gaussian) can lead to the
same result. This question is closely related to the following
question:

Are there any other parities for double lines of $2$-surfaces with
singularities and which of them can be obtained as extensions from
usual parities of framed $4$-graphs?

\subsection{Virtual knots and stronger groups}

In \cite{MM}, a generalization of the group $G$ is constructed; one
uses {\em iterated parities} by applying the map $\f$ and counting
parities of the obtained chord diagram. In a way similar to the
described above, one obtains an invariant of free knots valued in a
certain more complicated group. Seemingly, this group will give
further obstructions for sliceness; moreover, for {\em flat knots}
and {\em virtual knots}, the group information can be enriched by
adding some information about crossings (signs, orientations, etc).

Besides, when we deal not merely with free knots, but with some more
complicated objects (virtual knots), we may use more complicated
groups \cite{Fintype}. Most probably, the non-triviality of elements
of these groups may yield sliceness obstruction for virtual knots.

This problem is especially actual because the sliceness in dimension
$4$ has different meanings: a piecewise-flat slice knot might not be
smoothly slice. So, it would be very interesting to get some {\em
smooth sliceness obstructions}

\end{document}